\newif\ifULTRAMODE
\newcommand{\lang}[2]{\mscr{L}_{{\scriptscriptstyle #1}}^{{\scriptscriptstyle #2}}}
\newcommand{\compl}{{\mathsf{c}}}
\newcommand{\bb}{\mathbb}
\newcommand{\mcal}{\mathcal}
\newcommand{\mscr}{\mathscr}
\newcommand{\pow}{\mcal{P}}
\newcommand{\wo}{\setminus}
\newcommand{\msotimes}{\otimes}
\newcommand{\upperRomannumeral}[1]{\uppercase\expandafter{\romannumeral#1}}
\newcommand{\Rmn}[1]{\expandafter\@slowromancap\romannumeral #1@}
\newcommand{\oset}[3][0ex]{%
  \mathrel{\mathop{#3}\limits^{
    \vbox to#1{\kern-2\ex@
    \hbox{$\scriptstyle#2$}\vss}}}}
\newcommand{\exle}{\oset[0ex]{\scriptscriptstyle \exists}{<}}
\newcommand{\efi}{\forall}
\newcommand{\efii}{\exists}
\newcommand{\lift}[1]{\, \hat{#1} \,}
\newcommand{\conc}{\cdot}
\newcommand{\ew}{\varepsilon}
\newcommand{\FCon}{\Con_{\omega}}
\newcommand{\cnct}{\cdot}
\newcommand{\str}[1]{\mscr{#1}} %set font for structures
\newcommand{\cla}[1]{\mscr{#1}} %set font for classes
\newcommand{\cat}[1]{\mcal{#1}} %set font for cats
\newcommand{\profin}[1]{\widehat{#1}} %profinite completion
\DeclarePairedDelimiter\dangle{\langle}{\rangle}
\DeclareMathOperator{\Th}{Th}
\DeclareMathOperator{\qd}{qd}
\DeclareMathOperator{\PrimF}{PrimF}
\DeclareMathOperator{\UltraF}{UltraF}
\DeclareMathOperator{\BAlgO}{BAlgO}
\DeclareMathOperator{\BoolComp}{BoolComp}
\DeclareMathOperator{\TopMon}{TopMon}
\DeclareMathOperator{\At}{At}
\DeclareMathOperator{\Clop}{Clop}
\DeclareMathOperator{\Comp}{Comp}
\DeclareMathOperator{\Con}{Con}
\DeclareMathOperator{\MSO}{M}
\DeclareMathOperator{\LT}{LT}
\DeclareMathOperator{\FSpec}{SPEC}
\DeclareMathOperator{\base}{base}
\DeclareMathOperator{\dwcl}{dwcl}
\DeclareMathOperator{\Rec}{Rec}
\newtheorem{theorem}{Theorem}[section]
\newtheorem{lemma}[theorem]{Lemma}
\newtheorem{proposition}[theorem]{Proposition}
\theoremstyle{definition}
\newtheorem{definition}[theorem]{Definition}
\newtheorem{example}[theorem]{Example}
\newtheorem{corollary}[theorem]{Corollary}
\theoremstyle{remark}
\newtheorem{remark}[theorem]{Remark}
\newtheorem{notation}[theorem]{Notation}
\numberwithin{equation}{section}
\setlist[enumerate,itemize]{leftmargin=0.75cm,itemsep=1.5pt,topsep=1.5pt}
\begin{document}

\title[The pseudofinite mso theory of words, Deacon Linkhorn]{The pseudofinite monadic second order theory of words}

%    author one information
\author[The pseudofinite mso theory of words, Deacon Linkhorn]{Deacon Linkhorn}
\address{The University of Manchester, School of Mathematics, Oxford Road, Manchester, M13 9PL, UK}
\curraddr{}
\email{deacon.linkhorn@manchester.ac.uk}
\urladdr{https://deaconlinkhorn.com}
\thanks{}

\subjclass[2020]{Primary: 03C64, Secondary: 06E15, 06E25, 20M99, 22A30}

\keywords{Pseudofinite monadic second order theory of words, concatenation, extended Stone duality, free profinite topological semigroups/monoids, Ehrenfeucht-Fra\"iss\'e games, quantifier depth}

\date{\today}

\dedicatory{}

\begin{abstract}
We analyse the pseudofinite monadic second order theory of words over a fixed finite alphabet. In particular we present an axiomatisation of this theory, working in a one-sorted first order framework. The analysis hinges on the fact that concatenation of words interacts nicely with monadic second order logic. More precisely, give a signature under which for each natural number k, equivalence of (monadic second order versions of) words with respect to formulas of quantifier depth at most k is a congruence for concatenation. 
We use our analysis to present an alternative proof of a theorem connecting recognisable languages and finitely generated free profinite monoids via extended Stone duality, due to Gehrke, Grigorieff, and Pin.
\end{abstract}

\maketitle
\tableofcontents

\section{Introduction}

In this paper we analyse the pseudofinite monadic second order theory of $\Sigma$-words for a fixed finite alphabet $\Sigma$, by which we mean the shared monadic second order theory of finite $\Sigma$-words. 
A (finite) $\Sigma$-word is a function $w:\alpha_w \rightarrow \Sigma$ where $\alpha_w$ is a (finite) linear order.
In particular we give an axiomatisation $T_{\MSO(\Sigma^*)}$ (\cref{Def-TMSigma}) of the pseudofinite monadic second order theory of $\Sigma$-words, working in a one-sorted first order framework.
To capture the monadic second order properties of a finite word $w:\alpha_w \rightarrow \Sigma$, we work with a first order structure $\MSO(w)$ in a one-sorted signature $\lang{\Sigma}{\msotimes}$ (\cref{Def-MSOVersion}). 
The structure $\MSO(w)$, which we refer to as the monadic second order version of $w$, is in particular an expansion of the Boolean algebra of subsets of $\alpha_w$. 
The analysis hinges on the fact that concatenation of finite words interacts nicely with monadic second order logic, in particular when working in the signature $\lang{\Sigma}{\msotimes}$ (\cref{Def-MSOVersion}).
More precisely, we make use of the fact that for each $k \in \bb{N}$ equivalence of (monadic second order versions of) words with respect to $\lang{\Sigma}{\msotimes}$-formulas of quantifier depth at most $k$ is a congruence for concatenation (\cref{Prop-kequivcongruence}). 
The central result of the paper is \cref{Theorem-TMSigmaCorrect}, in which we prove that the theory $T_{\MSO(\Sigma^*)}$ which we introduced is indeed an axiomatisation for the pseudofinite monadic second order theory of $\Sigma$-words.

As an application, we present an alternative proof of a theorem connecting recognisable languages and finitely generated free profinite monoids via extended Stone duality, due to Gehrke, Grigorieff, and Pin (\cref{Theorem-GGP}).
Our proof is relatively self-contained and any duality-theoretic notions required are introduced in a minimal fashion as and when they are needed.

\ \\
\noindent \textbf{Related literature:}
A similar approach to studying the logical properties of words, making use of equivalence up to bounded quantifier depth being a congruence for concatenation, is used in \cite{vGSteinberg} to explore the connection between star-free languages (a special collection of recognisable languages) and free pro-aperiodic monoids (a special collection of profinite monoids). 
The main difference between the two papers is that in \cite{vGSteinberg} it is first order logic on finite words, not monadic second order logic, which aligns with star-free langauges and free pro-aperiodic monoids.

In the author's PhD thesis \cite{DLPhD} an axiomatisation is given for the shared monadic second order theory of finite words over a singleton alphabet, i.e. the shared monadic second order theory of finite linear orders.
The connection between recognisable languages over this alphabet and the free profinite monoid on one generator, which arises as a special case of the theorem of Gehrke, Grigorieff, and Pin (\cref{Theorem-GGP}, hereafter referred to as the GGP theorem), was also considered in the thesis.
There is a significant difference in perspective between the thesis work and the work presented here. 
In the thesis the GGP theorem was taken as a foundation, while the logical analysis of the shared monadic second order theory of finite linear orders was seen as offering a new perspective on this theorem.
In this paper this viewpoint is inverted, we show that a logical analysis of the pseudofinite monadic second order theory of words in fact facilitates an alternative proof of the GGP theorem.

\ \\
\noindent Our logical notation, wherever it is not explicitly defined or referenced, is borrowed from \cite{HodgesBible}.

\section{Preliminaries}\label{SecPrelim}

\subsection{Boolean algebras and extended Stone duality}\label{Subsection-ExtendedStoneDuality}\ \\
Throughout the paper we assume that the reader is comfortable switching between viewing Boolean algebras as partial orders and as structures for the signature $\{\vee,\wedge,\bot,\top,^{\compl}\}$.

The following material on extended Stone duality is adapted from \cite{Hansoul} which works in the more general setting of spectral spaces and distributive lattices.
We also confine our attention to the case of binary operations and ternary relations, whereas the general theory allows for consideration of $n$-ary operations and $n+1$-ary relations.

\begin{definition}\label{Def-BAlgO}
Let $L$ be a Boolean algebra. 
A binary operation $\cdot:L^2 \rightarrow L$ is called,
\begin{enumerate}
\item \textbf{normal} if for each $a \in L$, $a \cdot \bot = \bot \cdot a = \bot$,
\item \textbf{additive} if for each $a,b,c \in L$, $(a \vee b) \cdot c = (a \cdot c) \vee (b \cdot c)$ and $a \cdot (b \vee c) = (a \cdot b) \vee (a \cdot c)$.
\end{enumerate}
We view Boolean algebras with normal additive operations as structures for the signature $\lang{}{} = \{\vee,\wedge,\bot,\top, ^{\compl},\cdot\}$.
We define $\BAlgO$ to be the category whose objects are Boolean algebras equipped with a normal additive binary operation, and whose morphisms are the usual homomorphisms for the signature $\lang{}{}$.
\end{definition}

\begin{definition}
Let $X$ be a Boolean space. 
A relation $R \subseteq X^3$ is called \textbf{compatible} if for each pair $(A,B) \in \Clop(X)^2$ the set,
\[
R(A,B,-) \coloneqq \{z \in X: \exists x \in A \exists y \in B \ R(x,y,z)\},
\]
is clopen in $X$.
In other words, $R \subseteq X^3$ is compatible if the the function $\pow(X)^2 \rightarrow \pow(X)$ defined by $(A,B) \mapsto R(A,B,-)$ restricts to $\Clop(X)$.
Given a ternary relation $R \subseteq X^3$ and $z \in X$ we write $R^{-1}(z)$ as shorthand for the set $\{(x,y) \in X^2: R(x,y,z)\}$.
We define $\BoolComp$ to be the category whose objects are Boolean spaces equipped with a compatible ternary relation, and whose morphisms $f:(X,R) \rightarrow (Y,S)$ are continuous functions $f:X \rightarrow Y$ such that for each $z \in X$,
\[
S^{-1}(f(z)) = f(R^{-1}(z)).
\]
\end{definition}

\cite{Hansoul} Theorem 1.14 presents a more general version of the following theorem.
 
\begin{theorem}\label{Theorem-ExtendedStoneDuality}
The categories $\BAlgO$ and $\BoolComp$ are dual to one another. 

Let $F$ be the functor $\BAlgO \rightarrow \BoolComp$ which for objects takes $(L,\cdot)$ to $(\ifULTRAMODE{\UltraF(L)}\else{\PrimF(L)}\fi,R_{\cdot})$ where,
\begin{enumerate}
\item \ifULTRAMODE{$\UltraF(L)$ }\else{$\PrimF(L)$ }\fi is the Boolean space with underlying set comprising \ifULTRAMODE{ultrafilter}\else{prime filter}\fi s of $L$ and with the topology having an open basis comprising sets $D(a) \coloneqq \{p \in \ifULTRAMODE{\UltraF(L)}\else{\PrimF(L)}\fi: a \notin p\}$ where $a \in L$,
\item $R_{\cdot} \coloneqq \{(p,q,r) \in \ifULTRAMODE{\UltraF(L)}\else{\PrimF(L)}\fi^3 : \forall a \in p, \forall b \in q (a \cdot b \in r)\}$,
\end{enumerate}
and for morphisms takes $f:L_1 \rightarrow L_2$ in $\BAlgO$ to its preimage map $f^{-1}:\ifULTRAMODE{\UltraF(L_2)}\else{\PrimF(L_2)}\fi \rightarrow \ifULTRAMODE{\UltraF(L_1)}\else{\PrimF(L_1)}\fi$.

Let $G$ be the functor $\BoolComp \rightarrow \BAlgO$ which for objects takes $(X,R)$ to $(\Clop(X),\cdot)$ where,
\begin{enumerate}
\item $\Clop(X)$ is the Boolean algebra of clopen subsets of $X$,
\item $\cdot:\Clop(X)^2 \rightarrow \Clop(X)$ is the operation given by $A \cdot B \coloneqq \{z \in X: \exists x \in A, \exists y \in B \; R(x,y,z)\}$,
\end{enumerate}
and for morphisms takes $f:X_1 \rightarrow X_2$ in $\BoolComp$ to its preimage map $f^{-1}:\Clop(X_2) \rightarrow \Clop(X_1)$.

Then $FG$ is naturally isomorphic to the identity functor on $\BoolComp$ and $GF$ is naturally isomorphic to the identity functor on $\BAlgO$. 
\end{theorem}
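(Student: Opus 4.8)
The plan is to derive the statement from classical Stone duality, checking only that the extra data transports faithfully. Write $\widehat a := \{p \in \PrimF(L) : a \in p\}$. On underlying Boolean algebras and Boolean spaces $F$ and $G$ are the classical Stone contravariant functors, with their natural isomorphisms $\eta_L \colon L \to \Clop(\PrimF(L))$, $a \mapsto \widehat a$, and $\epsilon_X \colon X \to \PrimF(\Clop(X))$, $x \mapsto \{A \in \Clop(X) : x \in A\}$; since $D(a) = \widehat{a^{\compl}}$, the family $\{\widehat a : a \in L\}$ is simultaneously an open basis and a Boolean subalgebra of $\Clop(\PrimF(L))$, so by compactness of $\PrimF(L)$ every clopen equals some $\widehat a$, whence $\eta_L$ is a Boolean isomorphism and $\epsilon_X$ a homeomorphism. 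It then remains to verify: (i) $F$ and $G$ are well defined on objects; (ii) they are well defined on morphisms and functorial; and (iii) $\eta$ and $\epsilon$ refine to natural isomorphisms in $\BAlgO$ and $\BoolComp$. The naturality squares commute already by the classical case, so (iii) reduces to showing that $\eta_L$ preserves the operation and $\epsilon_X$ the relation, and the functoriality in (ii) is then formal.

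For (i), the $G$-half is immediate: for compatible $R$, the operation $A \cdot B := \{z : \exists x \in A, \exists y \in B \ R(x,y,z)\}$ takes clopens to clopens by hypothesis, is normal since $A \cdot \bot = \bot \cdot B = \emptyset$, and is additive because its defining condition is existential and hence distributes over unions in each of the first two arguments. The $F$-half is the technical core, and I would isolate it as the identity
\[
R_{\cdot}(\widehat a, \widehat b, -) = \widehat{a \cdot b} \qquad (a, b \in L),
\]
which — since every clopen is some $\widehat a$ — simultaneously shows $R_{\cdot}$ is compatible and that $\eta_L$ preserves $\cdot$ (settling the $\BAlgO$-part of (iii)). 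The inclusion $\widehat{a \cdot b} \supseteq R_{\cdot}(\widehat a, \widehat b, -)$ is a one-line unwinding of the definition of $R_{\cdot}$ (if $a \in p$, $b \in q$, and $R_{\cdot}(p,q,r)$, then $a \cdot b \in r$); the reverse inclusion is a J\'onsson--Tarski-style maximal-filter argument. Given a prime filter $r$ with $a \cdot b \in r$, first pick $p$ maximal among filters containing $a$ with $a' \cdot b \in r$ for all $a' \in p$ (the principal filter on $a$ qualifies, as $x \cdot b \geq a \cdot b$ whenever $x \geq a$), and check $p$ is prime: if $d, d^{\compl} \notin p$, maximality yields $e \in p$ with $(d \wedge e) \cdot b \notin r$ and $(d^{\compl} \wedge e) \cdot b \notin r$, yet additivity gives $(d \wedge e) \cdot b \vee (d^{\compl} \wedge e) \cdot b = e \cdot b \in r$, contradicting primeness of $r$. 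With $p$ fixed, repeat in the second coordinate: pick $q$ maximal among filters containing $b$ with $a' \cdot b' \in r$ for all $a' \in p$, $b' \in q$, arguing $q$ prime the same way via additivity in the second argument; then $a \in p$, $b \in q$, and $R_{\cdot}(p, q, r)$ by construction.

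For (ii), let $f \colon L_1 \to L_2$ be a $\BAlgO$-morphism. The preimage map $f^{-1} \colon \PrimF(L_2) \to \PrimF(L_1)$ is continuous by the classical theory, and the $\BoolComp$-condition $R_{\cdot_1}^{-1}(f^{-1}(z)) = f^{-1}(R_{\cdot_2}^{-1}(z))$ again splits into an easy inclusion (from $f(a \cdot_1 b) = f(a) \cdot_2 f(b)$) and a harder one: given prime filters $p, q$ of $L_1$ with $R_{\cdot_1}(p, q, f^{-1}(z))$, one must produce prime filters $p', q'$ of $L_2$ with $f^{-1}(p') = p$, $f^{-1}(q') = q$ and $R_{\cdot_2}(p', q', z)$. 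Here normality already forces $p$ and $q$ to miss $\ker f$ (otherwise $\bot \in z$), so the fibres over $p$ and $q$ are nonempty, and a maximal-filter argument carried out inside those fibres — of the shape used for the identity above — completes the step. Dually, for a $\BoolComp$-morphism $f \colon (X, R) \to (Y, S)$, the Boolean homomorphism $f^{-1} \colon \Clop(Y) \to \Clop(X)$ preserves $\cdot$ precisely because $S^{-1}(f(z)) = f(R^{-1}(z))$. Preservation of composition and identities, hence functoriality, and the naturality of $\eta$ and $\epsilon$, are then purely formal.

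It remains, for (iii), to show $\epsilon_X$ preserves the relation, i.e. that $R(x, y, z)$ holds if and only if $(\epsilon_X(x), \epsilon_X(y), \epsilon_X(z))$ lies in the relation $R_{\cdot}$ that $F$ builds from the operation $\cdot$ placed on $\Clop(X)$ by $G$. The direction $\Rightarrow$ is immediate: if $R(x, y, z)$ and $A \ni x$, $B \ni y$ are clopen, then $z \in A \cdot B$. For $\Leftarrow$, unwinding the hypothesis gives $(A \times B) \cap R^{-1}(z) \neq \emptyset$ for every basic neighbourhood $A \times B$ of $(x, y)$ in $X^2$, so $(x, y) \in \overline{R^{-1}(z)}$; one then concludes $R(x, y, z)$ provided $R^{-1}(z)$ is closed in $X^2$. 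I expect this to be the main obstacle: closedness of $R^{-1}(z)$ — equivalently, of $R$ as a subset of $X^3$ — is the point that must be either read into the notion of compatibility or wrung out of it using compactness of $X$, and it is exactly where the classical Stone machinery does not by itself suffice. The J\'onsson--Tarski-flavoured prime-filter extensions in the identity of step (i) and in step (ii) are the other substantial ingredients; the remainder is bookkeeping.
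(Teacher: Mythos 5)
The paper offers no proof of this theorem --- it is quoted verbatim from Hansoul --- so there is nothing in the text to compare your argument against. Your outline is essentially the standard proof (the one Hansoul gives, and Goldblatt in the unary case): reduce to classical Stone duality and transport the extra structure, with the J\'onsson--Tarski maximal-filter extension carrying the real weight in the identity $R_{\cdot}(\widehat a,\widehat b,-)=\widehat{a\cdot b}$ and in the surjectivity half of the morphism condition for $F$. Those steps are sound as written (modulo the routine repair of replacing your single witness $e$ by the meet $e_1\wedge e_2$ of the two witnesses that maximality actually provides, and likewise meeting the two second-coordinate witnesses when you primalise $q$).

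The obstacle you flag at the end is genuine, and it is worth being precise about where the fault lies: it is not a gap in your argument but an omission in the definition of $\BoolComp$ as stated in this paper. Compatibility as defined here --- $R(A,B,-)$ clopen for all clopen $A,B$ --- does \emph{not} imply that $R^{-1}(z)$ is closed. Take $X=\{0\}\cup\{1/n:n\geq 1\}\subseteq \bb{R}$ with the subspace topology and $R=\{(1/n,1/n,1):n\geq 1\}$; then $R(A,B,-)$ is always either $\emptyset$ or the clopen singleton $\{1\}$, so $R$ is compatible, yet $R^{-1}(1)=\{(1/n,1/n):n\geq 1\}$ is not closed in $X^2$. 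For this $(X,R)$ the unit $\epsilon_X$ recovers only the relation $\{(x,y,z):(x,y)\in\overline{R^{-1}(z)}\}$, so $FG\cong\mathrm{id}$ fails and the theorem is false under the definition as transcribed. Hansoul's dual objects carry the additional requirement that each $R^{-1}(z)$ be closed (point-closedness); with that added, your $\Leftarrow$ direction for $\epsilon_X$ closes immediately, and $F$ still lands in the corrected category because $R_{\cdot}^{-1}(r)$ is the intersection over all pairs with $a\cdot b\notin r$ of the complements of the clopen rectangles $\widehat a\times\widehat b$, hence closed. With that one amendment to the definition, your outline completes to a proof.
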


\subsection{Monoids}\ \\
Our general setup for working with topological monoids and category-theoretic notions for the most part closely follows \cite{GehrkeSTR} (in particular section 4.2), however our notation slightly differs in places.
Most of the setup for this subsection can be applied (and in \cite{GehrkeSTR} section 4.2, is applied) in the more general setting of universal algebra, but we will confine our attention to the case of monoids. 

\begin{notation}\label{Notation-complexification}
Let $X$ be a set and $\cdot:X^2 \rightarrow X$ a binary operation. 
We write $\lift{\cdot}$ for the binary operation on $\pow(X)$ given by,
\[
A \lift{\cdot} B \coloneqq \{a \cdot b: a \in A,b \in B\},
\]
for each $A,B \in \pow(X)$.
We call $\lift{\cdot}$ the \textbf{complexification} of $\cdot$ (see \cite{Goldblatt}, where algebras of complexes are discussed, for an explanation of the terminology).
\end{notation}

\begin{definition}\label{Def-Congruence}
A \textbf{congruence} on a monoid $(M,\cdot)$ is an equivalence relation $\rho$ on the underlying set $M$ such that for any $a,b,a',b' \in M$, $(a,a'),(b,b') \in \rho$ implies $(a \cdot b,a' \cdot b') \in \rho$.
In other words, a congruence is an equivalence relation $\rho$ for which we get a well-defined binary operation on equivalence classes `pointwise' (i.e. taking $[a]_{\rho} \cdot [b]_{\rho} \coloneqq [a \cdot b]_{\rho}$).
Note that for each congruence $\rho$ on $M$ the set of equivalence classes $\sfrac{M}{\rho}$ forms a monoid under this `pointwise' operation. 

We write $\Con(M)$ for the collection of congruences on a monoid $M$, and $\Con_{\omega}(M)$ for the subcollection of finite index congruences, i.e. those with finitely many equivalence classes. 
For $\rho \in \Con(M)$, the homomorphism $\pi_{\rho}: M \rightarrow \sfrac{M}{\rho}$ given by $a \mapsto [a]_{\rho}$ is the called the \textbf{canonical quotient map} of $\rho$.
\end{definition}

\begin{definition}\label{Def-Refinement}
Let $X$ be a set and take two equivalence relations $\rho$ and $\theta$ on $X$.
We say that $\rho$ \textbf{refines} $\theta$, or equivalently that \textbf{$\theta$ is a coarsening of $\rho$}, precisely when $\rho \subseteq \theta$, in other words when $x \sim_{\rho} x'$ implies $x \sim_{\theta} x'$.
If $\rho$ and $\theta$ are equivalence relations on a set $X$, and $\rho$ refines $\theta$, then there is a unique function $p_{\rho,\theta}: \sfrac{X}{\rho} \rightarrow \sfrac{X}{\theta}$ such that $p_{\rho,\theta} \circ \pi_{\rho} = \pi_{\theta}$. The function $p_{\rho,\theta}$ is given by $p_{\rho,\theta}([x]_{\rho}) \coloneqq [x]_{\theta}$, and is called the \textbf{projection map}.
\end{definition}

\begin{definition}
A subset $S$ of a poset $(P,\leq)$ is called 
\begin{enumerate}[-]
\item \textbf{directed} if for each $a,b \in S$ there exists $c \in S$ such that $a \leq c$ and $b \leq c$ (note in some parts of the literature this would instead be called upwards directed),
\item \textbf{cofinal} if for each $a \in P$ there is $b \in S$ such that $a \leq b$.
\end{enumerate}
\end{definition}

\begin{remark}
For each monoid $M$, viewing $\Con(M)$ as a partial order under $\supseteq$, the subcollection $\FCon(M)$ is a directed subset.
In particular for any $\rho,\theta \in \FCon(M)$ the set-theoretic intersection $\rho \cap \theta$ is an element of $\FCon(M)$ with $\rho \supseteq \rho \cap \theta$ and $\theta \supseteq \rho \cap \theta$.
\end{remark}

\begin{definition}
The category $\TopMon$ of topological monoids has as objects triples $(M,\cdot,\tau)$ comprising a monoid $(M,\cdot)$ and a topology $\tau$ on $M$ such that $\cdot:M^2 \rightarrow M$ is continuous ($M^2$ is given the product topology).
Arrows in $\TopMon$ are functions $f:M \rightarrow M'$ which are monoid homomorphisms that are moreover continuous with respect to the topologies. 
\end{definition}

\begin{definition}
A \textbf{directed system of topological monoids} is (for us) a \emph{contravariant} functor $F:I \rightarrow \TopMon$ where $I$ is a directed partial order viewed as a category.

Recall that a \textbf{cone} for a (contravariant) functor $F:\cat{C} \rightarrow \cat{D}$ comprises an object $d \in \cat{D}$ and for each $c \in \cat{C}$ an arrow $f_c:d \rightarrow F(c)$, such that for each arrow $f:c' \rightarrow c$ in $\cat{C}$ the equality $F(f) \circ f_c = f_d$ holds.

The \textbf{inverse limit} (also known as projective limit) of a directed system of topological monoids $F:I \rightarrow  \TopMon$ is a cone $(M,(\profin{\pi}_i)_{i \in I})$ for $F$ 
satisfying the universal property that for any other cone $(N,(g_i)_{i \in I})$ for $F$ there is a \emph{unique} morphism $h:N \rightarrow M$ such that for each $i \in I$ we have $g_i = \profin{\pi}_i \circ h$.
Note that unlike direct limits, which are in fact colimits, inverse limits really are limits according to the category theory lexicon. 
The inverse limit of the directed system $F$, which always exists in the category $\TopMon$, is denoted by $\varprojlim F$.
\end{definition}

\begin{lemma}\label{Lemma-CofinalDirectedSystem}
Let $F:I \rightarrow \TopMon$ be a directed system of topological monoids. 
If $C \subseteq I$ is a cofinal subset then the restriction $F'$ of $F$ to $C$ is a directed system of topological monoids and the two topological monoids $\varprojlim F$ and $\varprojlim F'$ are isomorphic.
\end{lemma}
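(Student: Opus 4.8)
The plan is to exhibit $\varprojlim F$, equipped with the subfamily of its projections indexed by $C$, as an inverse limit of $F'$; the asserted isomorphism then follows from the uniqueness up to (unique) isomorphism of objects satisfying a universal property. First one checks that $C$, with the order inherited from $I$, is again directed: given $c,c' \in C$ there is $i \in I$ above both, and by cofinality some $d \in C$ above $i$, hence above $c$ and $c'$. So $F'$ really is a directed system of topological monoids and $\varprojlim F'$ is defined.

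Write $(M,(\profin{\pi}_i)_{i \in I})$ for $\varprojlim F$, and for $j \le c$ in $I$ let $t_{c,j}: F(c) \to F(j)$ denote the corresponding transition morphism of $\TopMon$, so that $t_{d,j} = t_{c,j} \circ t_{d,c}$ whenever $j \le c \le d$. The subfamily $(M,(\profin{\pi}_c)_{c \in C})$ is a cone for $F'$, since the relevant cone equations form a subset of those satisfied by $(\profin{\pi}_i)_{i \in I}$. For the universal property, let $(N,(g_c)_{c \in C})$ be any cone for $F'$ in $\TopMon$. I extend $(g_c)_{c \in C}$ to a family $(g_i)_{i \in I}$ by setting, for each $j \in I$, $g_j \coloneqq t_{c,j} \circ g_c$ where $c \in C$ is any index with $c \ge j$ (such $c$ exists by cofinality); this is a $\TopMon$-morphism, being a composite of such, and it agrees with the given $g_j$ when $j \in C$ (take $c = j$). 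The one step needing care is that $g_j$ does not depend on the choice of $c$: if $c,c' \in C$ both dominate $j$, pick $d \in C$ above both (here is where directedness of $C$ enters), and then functoriality of $F$ together with the cone equations $t_{d,c} \circ g_d = g_c$ and $t_{d,c'} \circ g_d = g_{c'}$ give $t_{c,j} \circ g_c = t_{d,j} \circ g_d = t_{c',j} \circ g_{c'}$. A similar computation shows that $(g_i)_{i \in I}$ is a cone for $F$: given $j \le j'$ in $I$, choose $d \in C$ above $j'$ and reduce to functoriality and the cone equations for $(g_c)_{c \in C}$.

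Now by the universal property of $\varprojlim F$ there is a unique $\TopMon$-morphism $h: N \to M$ with $\profin{\pi}_i \circ h = g_i$ for all $i \in I$, in particular $\profin{\pi}_c \circ h = g_c$ for all $c \in C$. Conversely, if $h': N \to M$ is a $\TopMon$-morphism with $\profin{\pi}_c \circ h' = g_c$ for all $c \in C$, then for an arbitrary $j \in I$, choosing $c \in C$ above $j$ gives $\profin{\pi}_j \circ h' = t_{c,j} \circ \profin{\pi}_c \circ h' = t_{c,j} \circ g_c = g_j$, so $h'$ satisfies the $F$-cone equations as well, whence $h' = h$. Thus $(M,(\profin{\pi}_c)_{c \in C})$ satisfies the universal property defining an inverse limit of $F'$; since any two such are isomorphic via a unique cone isomorphism, $\varprojlim F' \cong M = \varprojlim F$ in $\TopMon$, which is the claim. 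The only genuine subtlety in the argument is the well-definedness of the extension $g_j \coloneqq t_{c,j} \circ g_c$; everything else is routine diagram chasing with directedness of $C$ and functoriality of $F$.
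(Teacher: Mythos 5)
Your argument is correct: you verify directly that the cone $(\varprojlim F,(\profin{\pi}_c)_{c\in C})$ satisfies the universal property of an inverse limit for $F'$, and the two points that actually need checking --- that $C$ inherits directedness from $I$ via cofinality, and that a cone over $F'$ extends to a well-defined cone over $F$ by $g_j \coloneqq t_{c,j}\circ g_c$ independently of the choice of $c\in C$ dominating $j$ --- are both handled properly, with the well-definedness reduced to directedness of $C$ plus functoriality exactly as it should be. The uniqueness clause is also done right: you show any $h'$ satisfying only the $C$-indexed equations automatically satisfies the $I$-indexed ones, so the uniqueness from $\varprojlim F$ transfers. The paper, by contrast, does not prove the lemma at all: it cites Mac Lane (Theorem 1, p.\ 217), i.e.\ the general theorem that limits may be computed along a final (there, initial/final) functor, of which the inclusion of a cofinal directed subset is a special case. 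So your route is genuinely different in character --- a self-contained diagram chase specialised to directed posets in $\TopMon$ versus an appeal to a general categorical result. What you gain is that the proof is elementary and makes visible exactly where cofinality and directedness are used; what the citation buys is brevity and the reassurance that nothing about $\TopMon$ specifically is needed (your argument likewise never uses anything beyond closure of $\TopMon$-morphisms under composition, so it is equally general, just stated less abstractly).
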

\begin{proof}
See for example \cite{Mac}, in particular Theorem 1 on page 217 (which works in a more general setting and under slightly different notation and setup).
\end{proof}

\begin{definition}\label{Def-Profinite}
Let $M$ be a monoid. 
Let $F_M:(\Con_{\omega}(M),\supseteq) \rightarrow \TopMon$ be the directed system given by taking $F_M(\rho)$ to be the finite monoid $\sfrac{M}{\rho}$ endowed with the discrete topology, and taking the projection map $p_{\rho,\theta}:\sfrac{M}{\rho} \rightarrow \sfrac{M}{\theta}$ (see \cref{Def-Refinement}) whenever $\theta \supseteq \rho$.
(Note this is a contravariant functor, as $F_M$ sends an arrow $\rho \xleftarrow{\subseteq} \theta$ to an arrow $\sfrac{M}{\rho} \xrightarrow{p_{\rho,\theta}} \sfrac{M}{\theta}$.)
We write $\profin{M}$ for the inverse limit $\varprojlim F_M$.
The topological monoid $\profin{M}$ is called the \textbf{profinite completion} of $M$.
In the case where $M$ is the free monoid $\Sigma^*$ on the set $\Sigma$, the topological monoid $\profin{\Sigma^*}$ is called the \textbf{free profinite monoid on $\Sigma$}.
In general a topological monoid $(M,\cdot,\tau)$ is called \textbf{profinite} if there exists a directed system $F:I \rightarrow \TopMon$ such that for each $i \in I$ the topological monoid $F(i)$ is finite and carries the discrete topology, such that $(M,\cdot,\tau) = \varprojlim F$.
\end{definition}

\begin{remark}
The choice of `profinite completion' as terminology for the inverse limit $\profin{M}$ may seem strange at first.
This choice can be justified in multiple ways. 
In a very general setting, as is noted in \cite{GehrkeSTR} pp 28, the forgetful functor from the category of profinite topological algebras of a given type into the category of algebras of that type admits a left adjoint, and this left adjoint sends an algebra $M$ to its profinite completion.
Alternatively, in the case of $\Sigma^*$ there exist `profinite metrics' (the distance between two words is inversely proportional to the minimal index of a finite congruence on $\Sigma^*$ for which the two words reside in distinct equivalence classes) for which $\profin{\Sigma^*}$ is the metric completion.
For the latter approach, see for example \cite{JEPinProfin}.
\end{remark}

\subsection{Uppercase/lowercase notation and comprehension schema}\ \\
We distinguish between upper and lower case variables as a notational shorthand for a certain relativisation of formulas (see \cite{HodgesBible} Section 5.1 pp 202 for a general overview of the relativisation technique) in the setting of atomic Boolean algebras.

\begin{definition}
An \textbf{atom} of a partial order $(P,\leq)$ with a smallest element $\bot$ is a minimal element of $P \wo \{\bot\}$. 
A Boolean algebra, viewed as a partial order, is called \textbf{atomic} if every element lies above an atom.
Note that for any partial order $(P,\leq)$ the atoms form a $0$-definable subset of $P$, and moreover this can be done uniformly.
By uniformly we mean that there is a single $\lang{}{} = \{\leq\}$ formula $\At(x)$ which defines the set of atoms in each partial order $(P,\leq)$.
Later, for the purpose of eliminating quantifiers, we will simply include $\At$ as a unary predicate symbol in the signatures we work with.
\end{definition}

\begin{remark}
For a Boolean algebra $\str{B}$, being atomic is equivalent to the following holding, 
\[
\forall x \forall y ((\forall z (\At(z) \rightarrow (z \leq x \leftrightarrow z \leq y))) \rightarrow x = y). \tag{$\dagger$}\label{AtomicFormula}
\]
This says, if two elements sit above the same atoms then they are equal.
\end{remark}

\begin{notation}
To make formulae more easily readable when working over theories of (expansions of) atomic Boolean algebras we make use of some notational conventions. For atoms we reserve lower case letters $x,y,z,\ldots$ for variables, and lower case letters $a,b,c,\ldots$ for constants and parameters. For general elements we use upper case letters $X,Y,Z,\ldots$ for variables and upper case letters $A,B,C,\ldots$ for constants and parameters.
Formally, the use of a lower case variable in a formula is shorthand for relativisation of that variable to the $0$-definable set of atoms in that formula. 
Moreover, we write $X(x)$ as shorthand for $x \subseteq X$.

To illustrate with an example, 
\[
\forall X \forall Y (\forall z (X(z) \leftrightarrow Y(z)) \rightarrow X=Y),
\]
is the rewriting of (\ref{AtomicFormula}) using these notational shorthands.
\end{notation}

\begin{definition}\label{Def-Comprehension}
Let $\lang{}{}$ be a signature containing a binary relation symbol $\subseteq$.

For each $\lang{}{}$-formula $\phi(x,\bar{Y})$, we define $\Comp_{\phi}$ to be the $\lang{}{}$-sentence,
\[
\forall \bar{Y} \exists Z \forall x (Z(x) \leftrightarrow \phi(x,\bar{Y})).
\]
(Recall that $Z(x)$ is shorthand for $x \subseteq Z$.)

If $\str{M}$ is an $\lang{}{}$-structure in which $\subseteq$ is the ordering of an atomic Boolean algebra, then $\Comp_{\phi}$ says that for each tuple of parameters $\bar{Y}$ there is an element $Z$ of the Boolean algebra which lies above precisely the atoms $x$ in the set defined by the formula $\phi$ with parameters $\bar{Y}$.

The \textbf{comprehension schema} for the signature $\lang{}{}$ comprises the sentence $\Comp_{\phi}$ for each $\lang{}{}$-formula $\phi$. 
\end{definition}

\section{Axiomatising the pseudofinite monadic second order theory of \texorpdfstring{$\Sigma$}{Σ}-words}\label{SecAxiomatisation}

We will make use of a one-sorted first-order setup to capture monadic second order structures. Space does not permit a detailed discussion of the motivations behind choosing this approach or the alternatives available. For these the reader is referred to the author's PhD thesis \cite{DLPhD}, in particular chapters 1 and 3. 

\begin{definition}
Let $\Sigma$ be a finite set. 
A \textbf{$\Sigma$-word} is a function $w:\alpha_w \rightarrow \Sigma$ where $\alpha_w$ is a linear order. 
We write $\Sigma^*$ for the collection of finite $\Sigma$-words, including the unique empty $\Sigma$-word $\ew:\emptyset \rightarrow \Sigma$.
For $\sigma \in \Sigma$ we sometimes identify $\sigma$ with the $\Sigma$-word $\{*\} \rightarrow \Sigma$, $* \mapsto \sigma$. 
\end{definition}

\begin{definition}\label{Def-MSOVersion}
We define $\lang{\Sigma}{\msotimes}$ to be the signature $\{\subseteq,\exle,\At,\bot,(P_{\sigma})_{\sigma \in \Sigma}\}$ in which,
\begin{enumerate}[-]
\item $\subseteq$ and $\exle$ are binary relation symbols,
\item $\At$ is a unary relation symbol,
\item $\bot$ is a constant symbol, 
\item $P_{\sigma}$ is a unary relation symbol for each $\sigma \in \Sigma$.
\end{enumerate}
For each $w:\alpha_w \rightarrow \Sigma$ from $\Sigma^*$ we define $\MSO(w)$ to be the $\lang{\Sigma}{\msotimes}$-structure in which,
\begin{enumerate}
\item the universe is $\pow(\alpha_w)$,
\item $\subseteq$ is the usual set-theoretic inclusion,
\item $\exle$ is $\{(A,B) \in \pow(\alpha)^2: a < b$ for some $a \in A, b \in B\}$, where $<$ is the ordering of $\alpha_w$,
\item $\At$ is the collection of atoms of $(\pow(\alpha_w),\subseteq)$, i.e. the singleton subsets of $\alpha_w$,
\item $\bot$ is the empty set,
\item for each $\sigma \in \Sigma$ the unary relation symbol $P_{\sigma}$ is given the interpretation,
\[
\{\{a\}: a \in w^{-1}(\sigma)\},
\]
i.e. $P_{\sigma}$ is a collection of atoms of the Boolean algebra, namely those $\{a\}$ for which $w(a) = \sigma$. 
\end{enumerate}
We call the $\lang{\Sigma}{\msotimes}$-structure $\MSO(w)$ the \textbf{monadic second order version of the word $w$}.
The \textbf{pseudofinite monadic second order theory of $\Sigma$-words} is defined to be the $\lang{\Sigma}{\msotimes}$-theory $\bigcap_{w \in \Sigma^*} \Th(\MSO(w))$.
We write $\Th(\MSO(\Sigma^*))$ as a shorthand for this theory.
\end{definition}

\begin{example}
Let $\Sigma = \{\sigma,\gamma,\delta\}$ and consider the $\Sigma$-word $\sigma \gamma \sigma$. 
In the setup we use this word is to be viewed as a function $w:\{0,1,2\} \rightarrow \{\sigma,\gamma\}$.
Then the monadic second order version $\MSO(w)$ comprises an expansion of the $8$ element Boolean algebra $\pow(\{0,1,2\})$.
In this expansion we have $P_{\sigma} = \{0,2\}$, $P_{\gamma} = \{1\}$ and $P_{\delta} = \emptyset$. 
\end{example}

Rather than give our axiomatisation of $\Th(\MSO(\Sigma^*))$ immediately, we first introduce a weaker $\lang{\Sigma}{\msotimes}$-theory $T_{\base(\Sigma)}$ which will serve as the setting for several technical lemmas. 

\begin{definition}
We define an $\lang{\Sigma}{\msotimes}$-theory $T_{\base(\Sigma)}$ to be the set of sentences expressing,
\newcounter{axiomnumbers}
\begin{enumerate}
\item atomic Boolean algebra (possibly trivial in the sense that $\bot = \top$) under $\subseteq$, with the atoms picked out by the unary relation $\At$,
\item $\bot$ is the bottom element of the Boolean algebra,
\item atoms linearly ordered (possibly trivially in the sense that the collection of atoms is empty) under $\exle$,
\item $\forall X,Y (X \exle Y \leftrightarrow \exists x \exists y (X(x) \wedge Y(y) \wedge x \exle y))$,
\item the relations $P_{\sigma}$ only hold for atoms and,
\[
\forall x (\bigvee_{\sigma \in \Sigma} P_{\sigma}(x) \wedge \bigwedge_{\substack{\sigma,\sigma' \in \Sigma \\ \sigma \neq \sigma'}} \neg (P_{\sigma}(x) \wedge P_{\sigma'}(x))).
\]
\setcounter{axiomnumbers}{\value{enumi}}
\end{enumerate}
\end{definition}

\begin{definition}\label{Def-TMSigma}
We define an $\lang{\Sigma}{\msotimes}$-theory $T_{\MSO(\Sigma^*)}$ to be the extension of $T_{\base(\Sigma)}$ by sentences expressing,
\begin{enumerate}
\setcounter{enumi}{\value{axiomnumbers}}
\item the linear ordering $\exle$ on the atoms, if not trivial, is discrete with endpoints (we denote the smallest and largest atoms by $0$ and $0^*$ respectively in this case),
\item each element not equal to $\bot$ lies above a smallest atom with respect to $\exle$,
\item the comprehension schema for $\lang{\Sigma}{\msotimes}$ (see \cref{Def-Comprehension}).
\end{enumerate}
\end{definition}

\begin{remark}
There is some redundancy in our choice of axioms. For example, the discreteness of the linear order and the presence of endpoints can both be seen to follow from the other axioms.
\end{remark}

For the remainder of this section, we prove that $T_{\MSO(\Sigma^*)}$ axiomatises the pseudofinite monadic second order theory of linear order.
Along the way some of the machinery required in \cref{Section-FreeProfin} will be introduced.

First we establish some lemmas working with our base theory $T_{\base(\Sigma)} \subseteq T_{\MSO(\Sigma^*)}$.

\begin{definition}
Let $(\alpha,<_{\alpha})$ and $(\beta,<_{\beta})$ be linear orders. We define $\alpha + \beta$ to be the linear ordering on the disjoint union $\alpha \sqcup \beta$ given by $<_{\alpha} \cup <_{\beta} \cup (\alpha \times \beta)$.
Given $\Sigma$-words $w:\alpha \rightarrow \Sigma$ and $v:\beta \rightarrow \Sigma$, we define their concatenation to be the $\Sigma$-word $w \conc v: \alpha + \beta \rightarrow \Sigma$ given by,
\[
(w \conc v) (i) \coloneqq
\begin{cases}
w(i) &\text{ if }i \in \alpha,\\
v(i) &\text{ if }i \in \beta.
\end{cases}
\]
\end{definition}

\begin{remark}
It is well known that the theory of Boolean algebras in the signatures $\{\subseteq\}$ and $\{\cup,\cap,^{\compl},\bot,\top\}$ are bi-interpretable.

Given Boolean algebras $\str{B}_1$ and $\str{B}_2$, the direct product $\str{B}_1 \times \str{B}_2$ given by taking the pointwise ordering is again a Boolean algebra. 
\end{remark}

\begin{definition}
Given $\lang{\Sigma}{\msotimes}$-structures $\str{M}$ and $\str{N}$, we define an $\lang{\Sigma}{\msotimes}$-structure $\str{M} \msotimes \str{N}$ as follows,
\begin{enumerate}
\item the universe of the structure is $\str{M} \times \str{N}$,
\item $\subseteq$ is defined as it is in the direct product, i.e. for all $(A,B),(C,D) \in \str{M} \msotimes \str{N}$ we declare $\str{M} \msotimes \str{N} \models (A,B) \subseteq (C,D)$ iff $\str{M} \models A \subseteq C$ and $\str{N} \models B \subseteq D$,
\item $\bot$ is interpreted in $\str{M} \msotimes \str{N}$ as $(\bot^{\str{M}},\bot^{\str{N}})$,
\item $\At$ is interpreted in $\str{M} \msotimes \str{N}$ as the collection of elements of the form $(A,\bot)$ where $A \in \At^{\str{M}}$ or $(\bot,B)$ where $B \in \At^{\str{N}}$,
\item $\exle$ is given  by $\str{M} \msotimes \str{N} \models (A,B) \exle (C,D)$ if and only if,
\[
\str{M} \not\models A = \bot \text{ and } \str{N} \not\models D = \bot, \text{ or }  \str{M} \models A \exle C, \text{ or }\str{N} \models B \exle D,
\]
\item for each $\sigma \in \Sigma$, $P_{\sigma}$ is given by $\str{M} \msotimes \str{N} \models P_{\sigma}((A,B))$ if and only if,
\[
\str{M} \models P_{\sigma}(A) \text{ and } \str{N} \models B = \bot, \text{ or, }\str{M} \models A = \bot \text{ and } \str{N} \models P_{\sigma}(B).
\]
\end{enumerate}
\end{definition}

\begin{remark}
For any $w,v \in \Sigma^*$ we have $\MSO(w) \msotimes \MSO(v) \cong \MSO(w \conc v)$.
We can therefore view $w \mapsto \MSO(w)$ as a homomorphism from $(\Sigma^*,\conc)$ to the class of $\lang{\Sigma}{\msotimes}$-structures with the operation $\msotimes$.

More generally, if $\str{M},\str{N} \models T_{\base(\Sigma^*)}$ then $\str{M} \msotimes \str{N}$ is the unique $\lang{\Sigma}{\msotimes}$-structure expanding the direct product of the Boolean algebras underlying $\str{M}$ and $\str{N}$, in which $\exle$ and the predicates $P_{\sigma}$ are interpreted according to the concatenation of the words underlying $\str{M}$ and $\str{N}$. 
\end{remark}

Working over a finite signature, the following definition organises unnested first-order formulas into a union of finite sets according to the amount of nesting of quantifiers involved. 
As each formula is equivalent over the empty theory to an unnested formula (\cite{HodgesBible} 2.6.2), this allows for inductive proofs targeting all formulas while only dealing with finitely many at a time. 

\begin{definition}[\cite{HodgesBible} pp. 103]\label{Definition-QuantifierDepth}
Recall that an $\lang{}{}$-formula is called \textbf{unnested} if all of its atomic subformulae are of the form $x = y$, $c=y$, $F(\bar{x}) = y$, or $R(\bar{x})$ where $x,y,\bar{x},\bar{y}$ comprise only variables, $c$ is a constant symbol of $\lang{}{}$, $F$ is a function symbol of $\lang{}{}$, and $R$ is a relation symbol of $\lang{}{}$.

Given an $\lang{}{}$-formula $\phi(\bar{x})$, the \textbf{quantifier depth} of $\phi$, denoted by $\qd(\phi)$, is defined by induction on the complexity of $\phi$ as follows,
\begin{enumerate}[-]
\item $\qd(\phi) = 0$ if $\phi$ is quantifier-free,
\item $\qd(\phi) = \max(\{\qd(\phi_i): 1 \leq i \leq n\})$ if $\phi$ is a Boolean combination of $\lang{}{}$-formulae $\phi_1,\ldots,\phi_n$,
\item $\qd(\exists v \phi) = \qd(\forall v \phi) = \qd(\phi)+1$.
\end{enumerate}

For each $k \in \bb{N}$ we can define an equivalence relation $\approx_k$ on the class of $\lang{}{}$-structures by taking $\str{M} \approx_k \str{N}$ if and only if for each \emph{unnested} $\lang{}{}$-sentence $\phi$ of quantifier depth at most $k$, $\str{M} \models \phi$ if and only if $\str{N} \models \phi$. 
\end{definition}

\begin{remark}
If $\lang{}{}$ is finite, then for each $k$ the equivalence relation $\approx_k$ on the class of $\lang{}{}$-structures admits finitely many equivalence classes.
This is because there are finitely many unnested sentences of quantifier depth at most $k$.
\end{remark}

\begin{theorem}[Fra\"iss\'e-Hintikka Theorem (\cite{HodgesBible} Theorem 3.3.2)]
Let $\lang{}{}$ be a finite first order signature. 
Let $k \in \bb{N}$, and write $C_1,\ldots,C_n$ for the finitely many equivalence classes of $\approx_k$. 
Then there are unnested $\lang{}{}$-sentences $\phi_1,\ldots,\phi_n$ of quantifier depth at most $k$, such that for each $\lang{}{}$-structure $\str{M}$ and each $i$ with $1 \leq i \leq n$,
\[
\str{M} \in C_i \Longleftrightarrow \str{M} \models \phi_i.
\]
We refer to $\phi_1,\ldots,\phi_n$ as \emph{Hintikka}-sentences.
\end{theorem}

\begin{remark}
Hintikka sentences axiomatising equivalence classes can be given effectively by an induction on $k$ but this is not required for our applications, see \cite{HodgesBible} 3.3.2 for more details. 

As each $\lang{}{}$-formula is equivalent to an unnested $\lang{}{}$-formula, it is clear that for two $\lang{}{}$-structures $\str{M}$ and $\str{N}$ the following are equivalent,
\begin{enumerate}
\item $\str{M} \equiv \str{N}$,
\item for each $k \in \bb{N}$, $\str{M} \approx_k \str{N}$.
\end{enumerate}

For each $k$, the equivalence relation $\approx_k$ can also be defined in terms of the second player having a winning strategy in the so-called \emph{unnested Ehrenfeucht-Fra\"iss\'e game} of length $k$, which we denote by $G_k$, between two $\lang{}{}$-structures. 
We make use of this equivalent formulation in the proof of the upcoming \cref{Prop-kequivcongruence} but space does not permit a full definition of these games. 
A full account of the games can be found in \cite{HodgesBible} pp. 102.
\end{remark}

\begin{proposition}\label{Prop-kequivcongruence}
Let $\str{M}_1,\str{M}_2,\str{N}_1,\str{N}_2 \models T_{\base(\Sigma)}$. Let $k \in \bb{N}$. 
If $\str{M}_1 \approx_k \str{N}_1$ and $\str{M}_2 \approx_k \str{N}_2$ then $\str{M}_1 \msotimes \str{M}_2 \approx_k \str{N}_1 \msotimes \str{N}_2$.
\end{proposition}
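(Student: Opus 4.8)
The plan is to use the Ehrenfeucht–Fra\"iss\'e game characterisation of $\approx_k$ and show that Duplicator can combine winning strategies on the factors into a winning strategy on the products. Concretely, I would fix winning strategies $\tau_1$ for Duplicator in $G_k(\str{M}_1,\str{N}_1)$ and $\tau_2$ for Duplicator in $G_k(\str{M}_2,\str{N}_2)$, and describe a strategy $\tau$ for Duplicator in $G_k(\str{M}_1 \msotimes \str{M}_2, \str{N}_1 \msotimes \str{N}_2)$. The key idea is that an element of a product structure $\str{A} \msotimes \str{B}$ is a pair, so when Spoiler plays an element $(A,B)$ on one side, Duplicator feeds $A$ to $\tau_1$ and $B$ to $\tau_2$, obtaining responses $A'$ and $B'$, and plays $(A',B')$. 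The responses on the two coordinates are maintained independently, so after $k$ rounds we have a play of $G_k(\str{M}_1,\str{N}_1)$ consistent with $\tau_1$ and a play of $G_k(\str{M}_2,\str{N}_2)$ consistent with $\tau_2$; hence both coordinate-plays are won by Duplicator, i.e. the coordinate maps $a_i \mapsto b_i$ and $c_i \mapsto d_i$ are partial isomorphisms for $\lang{\Sigma}{\msotimes}$ (in the unnested sense).

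Next I would verify that the pair map $(a_i,c_i) \mapsto (b_i,d_i)$ is then a partial isomorphism of the product structures. This is where the bulk of the (routine but essential) work lies: one checks each atomic formula of $\lang{\Sigma}{\msotimes}$ in $\str{M}_1 \msotimes \str{M}_2$ against its counterpart in $\str{N}_1 \msotimes \str{N}_2$, using the definition of $\msotimes$. For $\subseteq$ and $\bot$ this is immediate from the product definition, since these are given coordinatewise. For $\At$, $\exle$, and the $P_\sigma$ the defining conditions in the product are Boolean combinations of statements of the form ``$A = \bot$'' (equivalently $A \subseteq \bot$), ``$A \exle C$'', ``$A \in \At$'', ``$P_\sigma(A)$'' evaluated in the individual factors — all of which are (unnested) atomic $\lang{\Sigma}{\msotimes}$-formulas, or at worst quantifier-free formulas, preserved by the coordinate partial isomorphisms. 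Here it is important that Duplicator's strategies are for the \emph{unnested} EF game, so that partial-isomorphism preservation extends to all atomic formulas and not merely unnested ones; alternatively one notes that every atomic $\lang{\Sigma}{\msotimes}$-formula is, over $T_{\base(\Sigma)}$, equivalent to an unnested one, or simply that a partial map preserving unnested atomic formulas in both directions preserves all quantifier-free formulas. Since $T_{\base(\Sigma)}$ is preserved under $\msotimes$, the product structures are legitimate structures in which these computations make sense.

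Finally, once the combined partial map is a partial isomorphism after $k$ rounds, Duplicator wins $G_k$ on the products, and by the lemma relating $G_k$ to $\approx_k$ we conclude $\str{M}_1 \msotimes \str{M}_2 \approx_k \str{N}_1 \msotimes \str{N}_2$, as required. The main obstacle I anticipate is not the strategy-combination argument, which is standard, but the careful bookkeeping in the second step: the interpretation of $\exle$ in $\str{M} \msotimes \str{N}$ has that slightly asymmetric clause ``$A \neq \bot$ and $D \neq \bot$, or $A \exle C$, or $B \exle D$'', and one must check that each disjunct is governed by formulas that the coordinate partial isomorphisms preserve (in the right direction), so that the whole disjunction transfers; the $P_\sigma$ clauses require a similar but easier check. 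Everything else is bookkeeping.
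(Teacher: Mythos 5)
Your proposal is correct and follows essentially the same route as the paper: combining Duplicator's winning strategies coordinatewise in the unnested Ehrenfeucht--Fra\"iss\'e game of length $k$, then verifying case by case that the paired responses preserve all unnested atomic $\lang{\Sigma}{\msotimes}$-formulas, with the asymmetric $\exle$ clause being the only case requiring real care (and indeed the only one the paper works out explicitly). No gaps.
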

\begin{proof}
We outline a winning strategy of $\efii$ (the player who moves second) for the unnested Ehrenfeucht-Fra\"iss\'e game of length $k$ played on the structures $\str{M}_1 \msotimes \str{M}_2$ and $\str{N}_1 \msotimes \str{N}_2$, i.e. for $G_k(\str{M}_1 \msotimes \str{M}_2, \str{N}_1 \msotimes \str{N}_2)$.
The basic idea is to fuse moves taken from winning strategies for the games $G_k(\str{M}_1,\str{N}_1)$ and $G_k(\str{M}_2,\str{N}_2)$. 

More explicitly, the strategy of $\efii$ is as follows. 
We give the case where $\efi$ makes a move in $\str{M}_1 \msotimes \str{M}_2$, the case where $\efi$ moves in $\str{N}_1 \msotimes \str{N}_2$ is dealt with symmetrically.
\begin{enumerate}
\item Choose winning strategies in the games $G_k[\str{M}_1,\str{N}_1]$ and $G_k[\str{M}_2,\str{N}_2]$, possible by the assumption that $\str{M}_1 \approx_k \str{N}_1$ and $\str{M}_2 \approx_k \str{N}_2$,
\item upon receiving the $i$-th move $A_i = (A_{i1},A_{i2}) \in \str{M}_1 \msotimes \str{M}_2$ from $\efi$, take both $A_{i1}$ and $A_{i2}$ and feed them into simulations of the games $G_k[\str{M}_1,\str{N}_1]$ and $G_k[\str{M}_2,\str{N}_2]$ respectively,
\item take responses $B_{i1} \in \str{N}_1$ and $B_{i2} \in \str{N}_2$ dictated by the winning strategies for $G_k[\str{M}_1,\str{N}_1]$ and $G_k[\str{M}_2,\str{N}_2]$ respectively which were chosen at the outset,
\item form the move $B_i = (B_{i1},B_{i2}) \in \str{N}_1 \msotimes \str{N}_2$ as the response.
\end{enumerate}

Suppose that the strategy just described is deployed, and the resulting play of the game is $((A_1,B_1),\ldots,(A_k,B_k)) \in ((\str{M}_1 \msotimes \str{M}_2) \times (\str{N}_1 \msotimes \str{N}_2))^k$, with $A_i = (A_{i1},A_{i2}) \in \str{M}_1 \times \str{M}_2$ and $B_i = (B_{i1},B_{i2}) \in \str{N}_1 \times \str{N}_2$ for each $i$ such that $1 \leq i \leq k$.

We must check that for each unnested atomic $\lang{\Sigma}{\msotimes}$-formula $\phi(\bar{X})$ we have,
\[
\str{M}_1 \msotimes \str{M}_2 \models \phi(\bar{A}) \Leftrightarrow \str{N}_1 \msotimes \str{N}_2 \models \phi(\bar{B}).
\]
This can be done on a case by case basis.
Unnested atomic formulae in $\bar{X}$ are of the form $X = X'$, $X \subseteq X'$, $\At(X)$, $X \exle X'$, or $P_{\sigma}(X)$, where $X,X' \in \bar{X} \cup \{\bot\}$.

We deal with the case $X \exle X'$, and leave the remaining cases as an exercise for the reader. 
Suppose,
\[
\str{M}_1 \msotimes \str{M}_2 \models (A_{i1},A_{i2}) \exle (A_{j1},A_{j2}).
\] 
By definition of $\msotimes$ this is equivalent to at least one of the following conditions holding,
\begin{enumerate}
\item $\str{M}_1 \not\models A_{i1} = \bot$ and $\str{M}_2 \not\models A_{j2} = \bot$,
\item $\str{M}_1 \models A_{i1} \exle A_{j1}$, 
\item $\str{M}_2 \models A_{i2} \exle A_{j2}$.
\end{enumerate}
Now by our assumption that the pairs $(\bar{A}_1,\bar{B}_1)$ and $(\bar{A}_2,\bar{B}_2)$ are winning plays for $\efii$ in $G_k[\str{M}_1,\str{N}_1]$ and $G_k[\str{M}_2,\str{N}_2]$ respectively, these conditions are equivalent to the following respectively,
\begin{enumerate}
\item $\str{N}_1 \not\models B_{i1} = \bot$ and $\str{N}_2 \not\models B_{j2} = \bot$,
\item $\str{N}_1 \models B_{i1} \exle B_{j1}$, 
\item $\str{N}_2 \models B_{i2} \exle B_{j2}$.
\end{enumerate}
By definition of $\msotimes$, the disjunction of these conditions is equivalent to,
\[
\str{N}_1 \msotimes \str{N}_2 \models (B_{i1},B_{i2}) \exle (B_{j1},B_{j2}).
\]
By symmetry we can therefore conclude,
\[
\str{M}_1 \msotimes \str{M}_2 \models (A_{i1},A_{i2}) \exle (A_{j1},A_{j2}) \Leftrightarrow \str{N}_1 \msotimes \str{N}_2 \models (B_{i1},B_{i2}) \exle (B_{j1},B_{j2}).
\]
Hence we have outlined a winning strategy for $\efii$ in $G_k[\str{M}_1 \msotimes \str{M}_2,\str{N}_1 \msotimes \str{N}_2]$. 
\end{proof}

\begin{lemma}\label{Lemma-ClassTheory}
Let $\cla{C}$ be a class of $\lang{}{}$-structures.
For each $\lang{}{}$-structure $\str{M}$ the following are equivalent,
\begin{enumerate}[(1)]
\item $\str{M} \models \bigcap_{\str{N} \in \cla{C}} \Th(\str{N})$,
\item $\Th(\str{M}) \subseteq \bigcup_{\str{N} \in \cla{C}} \Th(\str{N})$.
\end{enumerate}
\end{lemma}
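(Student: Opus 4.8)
The plan is to prove the two implications separately, each by contraposition, using only the completeness of $\Th(\str{N})$ as a theory: for every $\lang{}{}$-sentence $\phi$ and every $\lang{}{}$-structure $\str{N}$, exactly one of $\phi$, $\neg\phi$ lies in $\Th(\str{N})$. First I would unwind the two conditions. Condition (1) says that every $\lang{}{}$-sentence lying in $\Th(\str{N})$ for all $\str{N}\in\cla{C}$ also holds in $\str{M}$; condition (2) says that every $\lang{}{}$-sentence holding in $\str{M}$ lies in $\Th(\str{N})$ for at least one $\str{N}\in\cla{C}$.

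For $(1)\Rightarrow(2)$ I would fix $\phi\in\Th(\str{M})$ and argue by contradiction: if $\phi\notin\Th(\str{N})$ for every $\str{N}\in\cla{C}$, then by completeness $\neg\phi\in\Th(\str{N})$ for every such $\str{N}$, hence $\neg\phi\in\bigcap_{\str{N}\in\cla{C}}\Th(\str{N})$; applying (1) gives $\str{M}\models\neg\phi$, contradicting $\phi\in\Th(\str{M})$. For $(2)\Rightarrow(1)$ I would fix $\phi\in\bigcap_{\str{N}\in\cla{C}}\Th(\str{N})$ and suppose $\str{M}\not\models\phi$; then $\neg\phi\in\Th(\str{M})$, so (2) produces some $\str{N}\in\cla{C}$ with $\neg\phi\in\Th(\str{N})$, which contradicts $\phi\in\Th(\str{N})$. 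Hence $\str{M}\models\phi$.

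I do not expect any substantial obstacle: this is a routine manipulation of complete theories, and the only point meriting a remark is the degenerate case $\cla{C}=\emptyset$. In that case $\bigcap_{\str{N}\in\cla{C}}\Th(\str{N})$ is the (inconsistent) set of all $\lang{}{}$-sentences while $\bigcup_{\str{N}\in\cla{C}}\Th(\str{N})=\emptyset$, so both (1) and (2) fail and the equivalence holds vacuously; alternatively one simply observes that (1) already forces $\cla{C}\neq\emptyset$, so the empty case needs no separate treatment in the $(1)\Rightarrow(2)$ direction.
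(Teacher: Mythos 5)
Your proof is correct and follows essentially the same route as the paper's: both directions are argued by contradiction, using that each $\Th(\str{N})$ is complete so that $\phi \notin \bigcup_{\str{N} \in \cla{C}} \Th(\str{N})$ forces $\neg\phi \in \bigcap_{\str{N} \in \cla{C}} \Th(\str{N})$, and symmetrically for the converse. Your added remark on the degenerate case $\cla{C} = \emptyset$ is a sensible observation the paper omits, but it does not change the substance of the argument.
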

\begin{proof}
We write $\Th(\cla{C})$ for $\bigcap_{\str{N} \in \cla{C}} \Th(\str{N})$.

(1) implies (2):\\
Towards a contradiction suppose that $\str{M} \models \Th(\cla{C})$ holds while $\Th(\str{M}) \subseteq \bigcup_{\str{N} \in \cla{C}} \Th(\str{N})$ fails.
Then there exists a sentence $\phi \in \Th(\str{M}) \wo \bigcup_{\str{N} \in \cla{C}} \Th(\str{N})$. 
But now note that for any sentence $\phi$, $\phi \notin \bigcup_{\str{N} \in \cla{C}} \Th(\str{N})$ implies that $\neg \phi \in \bigcap_{\str{N} \in \cla{C}} \Th(\str{N}) = \Th(\cla{C})$.
Therefore we get that $\str{M} \models \phi \wedge \neg \phi$, giving us a contradiction.

(2) implies (1):\\
Towards a contradiction suppose that $\Th(\str{M}) \subseteq \bigcup_{\str{N} \in \cla{C}} \Th(\str{N})$ while $\str{M} \not\models \Th(\cla{C})$. 
Then there exists a sentence $\phi \in \Th(\cla{C}) \wo \Th(\str{M})$.
Therefore $\neg \phi \in \Th(\str{M}) \subseteq \bigcup_{\str{N} \in \cla{C}} \Th(\str{N})$.
This implies that for some $\str{N} \in \cla{C}$ we have $\str{N} \models \phi \wedge \neg \phi$, giving us a contradiction.
\end{proof}

\begin{remark}
\Cref{Lemma-ClassTheory} can be rephrased into a more order-theoretic presentation. 
If $\str{B}$ is a Boolean algebra, $\{\mcal{V}_i:i \in I\}$ is a collection of ultrafilters of $\str{B}$, and $\mcal{U}$ is an ultrafilter of $\str{B}$, then the following are equivalent,
\begin{enumerate}[(1)]
\item $\bigcap_{i \in I} \mcal{V}_i \subseteq \mcal{U}$,
\item $\mcal{U} \subseteq \bigcup_{i \in I} \mcal{V}_i$.
\end{enumerate}
It is well known that each Boolean algebra is isomorphic to the Lindenbaum-Tarski algebra of some theory $T$ in some signature $\lang{}{}$ (see forthcoming \cref{Def-Lindenbaum}).
Ultrafilters of the Lindenbaum-Tarski algebra of a theory $T$ are precisely complete extensions of $T$, i.e. theories of the form $\Th(\str{M})$ for some $\str{M} \models T$.
Therefore the order-theoretic presentation is equivalent to the logical presentation \cref{Lemma-ClassTheory}.
\end{remark}

\begin{definition}
Let $\str{M} \models T_{\MSO(\Sigma^*)}$. 
For each $a \in \At(\str{M})$ it follows immediately from the comprehension scheme (in particular from $\Comp_{\phi}$ for $\phi(x,a): x \exle a \vee x = a$) that the initial interval induced by $a$ is an element of $\str{M}$. 
We denote this element by $[0,a]$.
Note that $[0,a]$ is in the definable closure of the element $a$.
\end{definition}

\begin{lemma}\label{Lemma-ResRel}
Let $\str{M} \models T_{\MSO(\Sigma^*)}$.
For each $a \in \At(\str{M})$, the set of elements which are beneath the element $[0,a]$ with respect to the ordering $\subseteq$ form the underlying set of a substructure of $\str{M}$. We denote this substructure by $\str{M} \upharpoonright [0,a]$.

By relativisation of quantifiers (see \cite{HodgesBible} Section 5.1 pp 202), for each $\lang{\Sigma}{\msotimes}$-formula $\phi(\bar{X})$ there is a formula $\phi(\bar{X}) \downharpoonright [0,x]$ where $x$ is a `fresh' variable (i.e. $x \notin \bar{X}$) such that for each $a \in \At(\str{M})$ and $\bar{A} \in \str{M} \upharpoonright [0,a]$,
\[
\str{M} \upharpoonright [0,a] \models \phi(\bar{A}) \Longleftrightarrow \str{M} \models \phi(\bar{A}) \downharpoonright [0,a].
\]

This generalises in the following way.
For each $B \in \str{M}$ the set of elements beneath $B$ form the underlying set of a substructure of $\str{M}$. 
We denote this substructure by $\str{M} \upharpoonright B$. 
Relativising quantifiers, for each $\lang{\Sigma}{\msotimes}$-formula $\phi(\bar{X})$ there is a formula $\phi(\bar{X}) \downharpoonright Y$ where $Y \notin \bar{X}$ such that for each $B \in \str{M}$ and each $\bar{A} \in \str{M} \upharpoonright B$,
\[
\str{M} \upharpoonright B \models \phi(\bar{A}) \Longleftrightarrow \str{M} \models \phi(\bar{A}) \downharpoonright B.
\]
\end{lemma}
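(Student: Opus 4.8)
The plan is to dispose of the substructure assertion by inspection and then derive the two relativisation statements from the general relativisation apparatus of \cite{HodgesBible} Section 5.1. For the substructure claim, note that $\lang{\Sigma}{\msotimes}$ has no function symbols and exactly one constant symbol, $\bot$, and that $\str{M} \models \bot \subseteq B$ for every $B \in \str{M}$ (so in particular $\bot \subseteq [0,a]$ for every atom $a$, where $[0,a]$ is a genuine element of $\str{M}$ by the instance $\Comp_{\phi}$ of comprehension for $\phi(x,a)\colon x \exle a \vee x = a$, together with atomicity). Hence $\{A \in \str{M} : A \subseteq B\}$ contains $\bot^{\str{M}}$ and there is nothing further to close under, so it is the universe of a unique substructure $\str{M} \upharpoonright B$ of $\str{M}$, in which $\subseteq, \exle, \At$ and the $P_{\sigma}$ are the restrictions of their interpretations in $\str{M}$; likewise for $\str{M} \upharpoonright [0,a]$.

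Before relativising formulas I would record the one compatibility fact needed to make the lowercase/uppercase shorthand behave well: $\At^{\str{M} \upharpoonright B} = \At^{\str{M}} \cap (\str{M} \upharpoonright B)$. This holds because $\{A : A \subseteq B\}$ is downward closed under $\subseteq$ in $\str{M}$: if a nonzero element of it were not an atom of $\str{M}$, a nonzero element of $\str{M}$ strictly below it would again lie below $B$, contradicting minimality; the reverse inclusion is immediate. Consequently, a lowercase (atom) quantifier relativised to $\str{M} \upharpoonright B$ ranges over exactly the atoms of $\str{M}$ lying below $B$.

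Next I would define $\phi(\bar{X}) \downharpoonright Y$, for $Y \notin \bar{X}$ fresh, by recursion on $\phi$: atomic formulas are left unchanged --- legitimate precisely because $\str{M} \upharpoonright B$ is a \emph{substructure}, so every relation symbol and the constant $\bot$ restrict --- Boolean connectives pass through unchanged, and quantifiers are bounded in the usual way, $(\exists Z\, \psi) \downharpoonright Y \coloneqq \exists Z\,(Z \subseteq Y \wedge \psi \downharpoonright Y)$ and $(\forall Z\, \psi) \downharpoonright Y \coloneqq \forall Z\,(Z \subseteq Y \rightarrow \psi \downharpoonright Y)$, where for a lowercase bound variable one simply conjoins $Z \subseteq Y$ to the $\At(Z)$ guard already carried by the shorthand. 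The equivalence $\str{M} \upharpoonright B \models \phi(\bar{A}) \Longleftrightarrow \str{M} \models \phi(\bar{A}) \downharpoonright B$ for $\bar{A} \in \str{M} \upharpoonright B$ is then an instance of the relativisation lemma of \cite{HodgesBible} Section 5.1; it can equally be checked by a direct induction on $\phi$, whose only non-formal base case, $\At(X)$, is exactly the compatibility fact above.

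Finally, for the $[0,a]$ version I would fold the parameter into the formula. Let $\theta(Y,x)$ be the $\lang{\Sigma}{\msotimes}$-formula $\At(x) \wedge \forall z\,(Y(z) \leftrightarrow (z \exle x \vee z = x))$, which in any model of $T_{\MSO(\Sigma^*)}$ holds of $(Y,a)$ exactly when $a$ is an atom and $Y = [0,a]$, such a $Y$ being unique by extensionality and existing by comprehension. Then set $\phi(\bar{X}) \downharpoonright [0,x] \coloneqq \exists Y\,(\theta(Y,x) \wedge \phi(\bar{X}) \downharpoonright Y)$, with $x$ fresh (equivalently one may use $\forall Y\,(\theta(Y,x) \rightarrow \cdots)$); unwinding the definitions and applying the previous paragraph with $B = [0,a]$ gives $\str{M} \upharpoonright [0,a] \models \phi(\bar{A}) \Longleftrightarrow \str{M} \models \phi(\bar{A}) \downharpoonright [0,a]$ for every atom $a$ and every $\bar{A} \in \str{M} \upharpoonright [0,a]$. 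I do not anticipate a real obstacle: the lemma is routine relativisation, and the only delicate point --- how relativisation interacts with the atom-quantifier shorthand --- is settled once and for all by the second paragraph.
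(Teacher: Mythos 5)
Your proposal is correct and matches the paper's approach: the paper gives no explicit proof of this lemma, treating it as an instance of standard relativisation of quantifiers (citing \cite{HodgesBible} Section 5.1), which is exactly the apparatus you invoke and then carefully instantiate. The details you supply --- the substructure check via the absence of function symbols, the compatibility of $\At$ with restriction, and the elimination of the parameter $[0,a]$ via a defining formula using comprehension and extensionality --- are precisely the routine verifications the paper leaves implicit.
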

\begin{proof}
This is a straightforward application of relativisation of quantifiers. 
\end{proof}

\begin{definition}\label{Def-BoundedTypeSpace}
We write $S_k$ for the set of equivalence classes of the equivalence relation $\approx_k$ which contain a structure of the form $\MSO(w)$ where $w \in \Sigma^*$.
\end{definition}

\begin{lemma}
For each $k \in \bb{N}$ the set $S_k$ is closed under $\lift{\msotimes}$.
Moreover for each $k \in \bb{N}$ the structure $(S_k,\lift{\msotimes})$ is a monoid.
\end{lemma}
\begin{proof}
Suppose that $\str{M}$ and $\str{N}$ are $\lang{\Sigma}{\msotimes}$-structures such that $\str{M} \approx_k \MSO(w)$ and $\str{N} \approx_k \MSO(v)$ for some $w,v \in \Sigma^*$.
Then  by \cref{Prop-kequivcongruence} it follows that $\str{M} \msotimes \str{N} \approx_k \MSO(w) \msotimes \MSO(v) \cong \MSO(w \conc v)$, hence $S_k$ is indeed closed under $\lift{\msotimes}$. 
For each $k \in \bb{N}$ the operation $\lift\msotimes$ on $S_k$ is associative and admits the equivalence class containing $\MSO(\ew)$ as an identity element, hence $(S_k,\lift{\msotimes})$ is a monoid.
\end{proof}

\begin{notation}
For each $k \in \bb{N}$, we will write $\msotimes_k$ for the restriction of $\lift{\msotimes}$ to $S_k$.
\end{notation}

\begin{theorem}\label{Theorem-TMSigmaCorrect}
The theory $T_{\MSO(\Sigma^*)}$ is an axiomatisation of the pseudofinite monadic second order theory of $\Sigma$-words, $\bigcap_{w \in \Sigma^*} \Th(\MSO(w))$.
\end{theorem}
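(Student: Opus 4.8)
The statement splits into a \emph{soundness} half and a \emph{completeness} half. Soundness is the assertion that $\MSO(w) \models T_{\MSO(\Sigma^*)}$ for every $w \in \Sigma^*$, which is a routine axiom-by-axiom check (the only mildly delicate point being the empty word $\ew$, for which the axioms (3)--(5) on the order $\exle$ of the atoms are read in their vacuous form); granting it, every consequence of $T_{\MSO(\Sigma^*)}$ lies in $\bigcap_{w \in \Sigma^*} \Th(\MSO(w)) = \Th(\MSO(\Sigma^*))$. For completeness I must show that every $\str{M} \models T_{\MSO(\Sigma^*)}$ is a model of $\Th(\MSO(\Sigma^*))$. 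By \cref{Lemma-ClassTheory} this is equivalent to $\Th(\str{M}) \subseteq \bigcup_{w \in \Sigma^*} \Th(\MSO(w))$, i.e.\ to the statement that every $\lang{\Sigma}{\msotimes}$-sentence true in $\str{M}$ is true in some finite word. Since a sentence of quantifier depth $k$ is, up to logical equivalence, a disjunction of depth-$k$ Hintikka sentences (Fra\"iss\'e--Hintikka, cf.\ \cref{Def-BoundedTypeSpace}), and there are only finitely many $\approx_k$-classes, completeness reduces to: for every $\str{M} \models T_{\MSO(\Sigma^*)}$ and every $k \in \bb{N}$ there is a finite word $w \in \Sigma^*$ with $\MSO(w) \approx_k \str{M}$. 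Call this the \emph{Key Claim}.

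I would prove the Key Claim by induction on $k$ (the base case $k = 0$ is immediate, since $\approx_0$ is trivial on models of $T_{\base(\Sigma)}$: the only terms are built from $\bot$, so all quantifier-free sentences have a fixed truth value there). Write $A := (\At(\str{M}), \exle)$ for the atom line, with least atom $0$ and greatest atom $0^*$; by axioms (3)--(5) it is a discrete linear order with endpoints in which every non-$\bot$ element of $\str{M}$, viewed as a set of atoms, has an $\exle$-least atom. If $A$ is finite then $\str{M} \cong \MSO(w)$ for the obvious $w$ and there is nothing to prove. There is also an easy partial reduction in general: by the comprehension schema the set $\{a \in A : \str{M}\upharpoonright[0,a] \approx_k \str{M}\}$ is an element of $\str{M}$ (it is defined by the relativised depth-$k$ Hintikka sentence of $\str{M}$, using \cref{Lemma-ResRel}), so by axiom (5) it has an $\exle$-least atom $a_0$; if $a_0$ lies in the ``standard initial segment'' of $A$ (the atoms reachable from $0$ by finitely many $\exle$-successor steps) then $\str{M}\upharpoonright[0,a_0] \cong \MSO(w)$ for a genuine finite word $w$, and $\MSO(w) \approx_k \str{M}$. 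So the one remaining case is that $\str{M}$'s atom line is, informally, infinite even from the point of view of $\approx_k$.

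For that case I would run the composition / Ehrenfeucht--Fra\"iss\'e argument, whose ingredients are all in place: \cref{Prop-kequivcongruence}, which computes the $\approx_j$-class of a concatenation $\str{N}_1 \msotimes \str{N}_2$ from the classes of the factors; \cref{Lemma-ResRel}, which for $B \in \str{M}$ gives the interval substructure $\str{M}\upharpoonright B$ and relativises $\lang{\Sigma}{\msotimes}$-formulas to it --- one checks (relativising the axioms, and using the remark on $\msotimes$) that each $\str{M}\upharpoonright[a,b]$ is itself a model of $T_{\MSO(\Sigma^*)}$ and that $\str{M}$ is the $\msotimes$-concatenation of consecutive interval substructures, so that the induction hypothesis applies to the pieces; and the finiteness of the set of $\approx_{k-1}$-classes. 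Concretely: take $w$ long, built from representatives of all the (finitely many) $\approx_{k-1}$-classes in $S_{k-1}$ over $\Sigma$ and its finite colour-extensions, each repeated many times and padded, and exhibit a winning strategy for the duplicator $\efii$ in $G_k(\str{M}, \MSO(w))$. After $i$ moves the chosen elements colour both atom lines, and (where an atom was chosen) cut them into consecutive blocks; the duplicator maintains a block-by-block correspondence under which corresponding blocks --- finite or infinite on the $\str{M}$ side --- carry the same $\approx_{k-1}$-class, which by the induction hypothesis is one of the finitely many classes in $S_{k-1}$ for the relevant colour-extended alphabet, matching short blocks exactly and using axioms (4) and (5) to pad the long ones; the fact that a colour move may split a line into infinitely many monochromatic blocks is absorbed, in the usual way, by an idempotent/Ramsey step in the finite monoid $S_{k-1}$. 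A play consistent with this strategy preserves all unnested atomic $\lang{\Sigma}{\msotimes}$-formulas, by the block bookkeeping together with \cref{Prop-kequivcongruence}; hence $\MSO(w) \approx_k \str{M}$, closing the induction and proving the theorem.

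The main obstacle is exactly this composition step, in three linked pieces: (i) choosing $w$ long and rich enough to match \emph{every} sequence of block types that the spoiler's moves could force; (ii) carrying the whole induction simultaneously over all finite alphabets, so that a set move is recorded uniformly as one extra monadic colour; and (iii) the Ramsey/idempotent compression needed to reduce the possibly infinitely many monochromatic blocks of $\str{M}$'s atom line to finitely much data --- this last point being precisely where axioms (4) (discreteness with endpoints) and (5) (the least-atom condition, i.e.\ internal well-foundedness of $A$ from the left) force $\str{M}$ to be indistinguishable, in $k$ moves, from a sufficiently long finite word.
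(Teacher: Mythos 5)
Your reduction is the right one, and it matches the paper's: soundness is routine, and by \cref{Lemma-ClassTheory} plus Fra\"iss\'e--Hintikka the whole theorem comes down to your Key Claim, that every $\str{M} \models T_{\MSO(\Sigma^*)}$ satisfies $\str{M} \approx_k \MSO(w)$ for some $w \in \Sigma^*$, for each $k$. But the way you propose to establish the Key Claim is not a proof: it is a sketch of a full composition-method argument (a rich witness word $w$, block bookkeeping in $G_k(\str{M},\MSO(w))$, colour extensions of the alphabet, and a Ramsey/idempotent compression in $S_{k-1}$ to absorb the possibly infinitely many monochromatic blocks of a nonstandard atom line), and you yourself list the three hard steps as unresolved ``obstacles.'' Step (iii) in particular --- why the least-atom axiom and discreteness force an internally-infinite atom line to be $k$-indistinguishable from a finite one --- is exactly the content of the theorem, so leaving it as an assertion is a genuine gap, not a detail.

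The frustrating part is that you came within a hair of the paper's actual argument and then walked past it. In your ``partial reduction'' you apply comprehension and the least-atom axiom to the set $\{a : \str{M}\upharpoonright[0,a] \approx_k \str{M}\}$; apply them instead to the \emph{bad} set $\{a \in \At(\str{M}) : \str{M}\upharpoonright[0,a] \not\approx_k \MSO(w) \text{ for all } w \in \Sigma^*\}$, which is definable by the negation of the relativised disjunction $\psi_k$ of the finitely many Hintikka sentences of classes in $S_k$. If $\str{M} \not\approx_k \MSO(w)$ for every $w$, this set contains $0^*$, hence is non-$\bot$ and has an $\exle$-least atom $a$. Since $\str{M}\upharpoonright[0,0] \cong \MSO(\sigma)$, we have $a \neq 0$, so by discreteness $a$ has an immediate predecessor $b$, and $[0,b]$ is good by minimality; but $\str{M}\upharpoonright[0,a] \cong \str{M}\upharpoonright[0,b] \msotimes \MSO(\sigma)$, so \cref{Prop-kequivcongruence} makes $[0,a]$ good as well --- contradiction. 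This internal induction replaces your entire Ehrenfeucht--Fra\"iss\'e/Ramsey apparatus: the only game-theoretic input needed is \cref{Prop-kequivcongruence}, which is already proved. I would rewrite your third paragraph along these lines and delete the composition machinery entirely.
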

\begin{proof}
It is clear that for each $w \in \Sigma^*$, $\MSO(w) \models T_{\MSO(\Sigma^*)}$, and hence $T_{\MSO(\Sigma^*)} \subseteq \Th(\MSO(\Sigma^*))$. So we must show that if $\str{M}$ is an $\lang{\Sigma}{\msotimes}$-structure and $\str{M} \models T_{\MSO(\Sigma^*)}$ then $\str{M} \models \Th(\MSO(\Sigma^*))$.
By \cref{Lemma-ClassTheory}, $\str{M} \models \Th(\MSO(\Sigma^*))$ is equivalent to $\Th(\str{M}) \subseteq \bigcup_{w \in \Sigma^*} \Th(\MSO(w))$.

Fix $\str{M} \models T_{\MSO(\Sigma^*)}$. 
It is enough to prove that for each $k \in \bb{N}$ there exists $w_k \in \Sigma^*$ (depending on $\str{M}$) such that $\str{M} \approx_k \MSO(w_k)$. Then $\Th(\str{M}) \subseteq \bigcup_{k \in \bb{N}} \Th(\MSO(w_k)) \subseteq  \bigcup_{w \in \Sigma^*} \Th(\MSO(w))$.

For each $k \in \bb{N}$, there is a sentence $\psi_k$ such that for each $\lang{\Sigma}{\msotimes}$-structure $\str{M}$ the following are equivalent,
\begin{enumerate}[-]
\item $\str{M} \models \psi_k$,
\item $\str{M} \approx_k \MSO(w)$ for some $w \in \Sigma^*$.
\end{enumerate}
The sentence $\psi_k$ is given by taking the Hintikka sentences $\phi_1,\ldots,\phi_n$ axiomatising the equivalence classes $C_1,\ldots,C_n$ of $\approx_k$, and forming the disjuntion of those $\phi_i$ such that $C_i$ contains $\MSO(w)$ for some $w \in \Sigma^*$.

Consider the formula $\psi_k \downharpoonright [0,x]$. 
It is clear that for each $\str{M} \models T_{\MSO(\Sigma^*)}$ we have that $\str{M} \upharpoonright [0,0] \cong \MSO(\sigma)$ for some $\sigma \in \Sigma$. 
Clearly $\MSO(\sigma) \models \psi_k$ for each $k \in \bb{N}$, hence $\str{M} \models \psi_k \downharpoonright [0,0]$.
On the other hand, $\str{M} \upharpoonright [0,0^*] = \str{M}$, so that $\str{M} \models \psi_k$ is equivalent to $\str{M} \models \psi_k \downharpoonright [0,0^*]$.

Suppose towards a contradiction that $\str{M} \not\models \psi_k$.
Equivalently, suppose $\str{M} \not\models \psi_k \downharpoonright [0,0^*]$.
By comprehension there is an element of $\str{M}$ which lies above precisely those $a \in \At(\str{M})$ for which $\str{M} \not\models \psi_k \downharpoonright [0,a]$.
This element cannot be $\bot$, as we supposed (towards a contradiction) that it contains $0^*$.
Therefore, by the axioms of $T_{\MSO(\Sigma*)}$, it must contain a smallest atom $a \in \At(\str{M})$ (with respect to $\exle$).
Now since $\str{M} \models \psi_k \downharpoonright [0,0]$ we must have $a \neq 0$. As $a \neq 0$, we can take its immediate predecessor $b \in \At(\str{M})$. 
We then get a contradiction using \cref{Prop-kequivcongruence} and the observation that $\str{M} \upharpoonright [0,a] \cong \str{M} \upharpoonright [0,b] \msotimes \MSO(\sigma)$  where $\sigma$ is the unique element of $\Sigma$ such that $\str{M} \models P_{\sigma}(a)$. 
Since $\str{M} \upharpoonright [0,b] \models \psi_k$ there exists $w \in \Sigma^*$ such that $\str{M} \upharpoonright [0,b] \approx_k \MSO(w)$, and therefore $\str{M} \upharpoonright [0,a] \approx_k \MSO(w')$ where $w' \in \Sigma^*$ is given by appending $\sigma$ to $w$. 
This then implies $\str{M} \upharpoonright [0,a] \models \psi_k$, whence we have arrived at a contradiction.

Therefore $\str{M} \models \psi_k$ as required.
\end{proof}

\begin{remark}
We write $S(T_{\MSO(\Sigma^*)})$ for the space of complete theories extending $T_{\MSO(\Sigma^*)}$, i.e. the space with underlying set comprising theories of the form $\Th(\str{M})$ for some $\str{M} \models T_{\MSO(\Sigma^*)}$ and with topology given by a basis of clopen subsets of the form $\dangle{\phi} \coloneqq \{T \in S(T_{\MSO(\Sigma^*)}): T \models \phi\}$.
For each $k \in \bb{N}$, by \cref{Theorem-TMSigmaCorrect}, $\Th(\str{M}) \mapsto [\str{M}]_{\approx_k}$ is a well-defined function $S(T_{\MSO(\Sigma^*)}) \rightarrow S_k$.

The Fra\"iss\'e-Hintikka theorem tells us that this function is continuous when we equip $S_k$ with the discrete topology. 
This is because it tells us that the preimage of each singleton subset of $S_k$ is axiomatised by a sentence, and hence is basic clopen. 
\end{remark}

\section{The free profinite monoid generated by \texorpdfstring{$\Sigma$}{Σ}}\label{Section-FreeProfin}

\begin{definition}
Let $\Sigma$ be a finite set.
A subset $L$ of $\Sigma^*$ is called a \textbf{recognisable $\Sigma$-language} (or a recognisable subset of $\Sigma^*$) if there exists a finite semigroup $S$, a subset $F \subseteq S$, and a homomorphism $f:\Sigma^* \rightarrow S$ such that $L = f^{-1}(F)$. 
\end{definition}

\begin{remark}
It is well-known that the collection $\Rec(\Sigma)$ of recognisable subsets of $\Sigma^*$ forms a Boolean algebra under the set-theoretic operations of intersection, union, and complement (with the empty set and $\Sigma^*$ being the bottom and top elements respectively). 

Recognisable subsets can also be characterised as the languages accepted by certain finite automata, or as those sets of finite words which are finitely axiomatisable in monadic second order logic (see \cref{Theorem-BuchiMSOTheorem}).

It is also well known that the collection of recognisable languages is closed under the complexification of concatenation $\lift{\cdot}: \pow(\Sigma^*)^2 \rightarrow \pow(\Sigma^*)$ (see \cref{Notation-complexification}).
It is also straightforward to check that the complexification of concatenation is both normal and additive (see \cref{Def-BAlgO}).
It therefore makes sense to talk about the extended Stone dual of $(\Rec(\Sigma),\lift{\conc})$ in the sense of \cref{Subsection-ExtendedStoneDuality}.
\end{remark}

In \cite{GGP} the following theorem is stated, but a proof is not given.

\begin{theorem}[\cite{GGP} Theorem 6.1]\label{Theorem-GGP}
The Boolean algebra $\Rec(\Sigma)$ of recognisable $\Sigma$-languages together with (the complexification of) the binary operation of concatenation is the extended Stone dual of $\profin{\Sigma^*}$, the free profinite monoid on $\Sigma$.
\end{theorem}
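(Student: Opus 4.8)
The plan is to establish the duality concretely, matching up the two sides of extended Stone duality via the machinery built in the previous section. The key observation is that the extended Stone dual $\PrimF(\Rec(\Sigma))$ (as a Boolean space with compatible ternary relation) should be identified with $\profin{\Sigma^*}$, and under this identification the complexified concatenation $\lift{\conc}$ on $\Rec(\Sigma)$ corresponds to the profinite multiplication on $\profin{\Sigma^*}$. Rather than invoking heavy duality theory directly, I would exploit the fact that both $\profin{\Sigma^*}$ and the dual space arise as inverse limits indexed by finite data, and that the logical analysis via $\approx_k$ and $\MSO(w)$ lets us compute both sides explicitly at each finite stage.

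First I would recall (or prove, via \cref{Theorem-BuchiMSOTheorem}, which is referenced but presumably stated between this excerpt and the proof) the correspondence between recognisable $\Sigma$-languages and $\lang{\Sigma}{\msotimes}$-sentences: a subset $L \subseteq \Sigma^*$ is recognisable if and only if $L = \{w \in \Sigma^* : \MSO(w) \models \phi\}$ for some $\lang{\Sigma}{\msotimes}$-sentence $\phi$. This gives a surjection from sentences to $\Rec(\Sigma)$, and hence identifies $\Rec(\Sigma)$ with a quotient of the Lindenbaum algebra, so that the prime filters of $\Rec(\Sigma)$ correspond to complete theories extending $\Th(\MSO(\Sigma^*))$ — which by \cref{Theorem-TMSigmaCorrect} are exactly the complete extensions of $T_{\MSO(\Sigma^*)}$. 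Next I would bound the analysis at quantifier depth $k$: the clopen partition of $\PrimF(\Rec(\Sigma))$ by depth-$k$-definable languages is indexed by $S_k$ (the quotient from \cref{Def-BoundedTypeSpace}), since two prime filters agree on all depth-$\leq k$ languages iff the corresponding structures are $\approx_k$-equivalent, and each such class contains some $\MSO(w)$. So $\PrimF(\Rec(\Sigma)) = \varprojlim_k S_k$ as Boolean spaces. Then I would check that the compatible ternary relation $R_{\lift{\conc}}$ projects, at each finite stage, to the graph of the monoid operation $\msotimes_k$ on $S_k$ — this is where \cref{Prop-kequivcongruence} does the real work, showing $\approx_k$ is a congruence so that concatenation descends to $S_k$, and one checks the three-place relation $R_{\lift\conc}(p,q,-)$ on the $k$-stage is exactly $\{[\MSO(w)]_{\approx_k} \lift{\cdot}_k [\MSO(v)]_{\approx_k}\}$.

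Finally I would assemble: the family $(S_k, \msotimes_k)$ with the parent maps forms a directed system of finite discrete topological monoids, so $\varprojlim_k (S_k,\msotimes_k)$ is profinite by definition. On one hand this inverse limit is the dual space of $\Rec(\Sigma)$ with its compatible relation, by the stagewise identification above together with \cref{Theorem-ExtendedStoneDuality} and \cref{Lemma-CofinalDirectedSystem}; on the other hand it is $\profin{\Sigma^*}$, because the canonical map $\Sigma^* \to S_k$ sending $w$ to $[\MSO(w)]_{\approx_k}$ is a surjective monoid homomorphism onto a finite monoid, these congruences are cofinal in $\FCon(\Sigma^*)$ (every finite-index congruence on $\Sigma^*$ refines some $\approx_k$-congruence, since a finite quotient corresponds to finitely many recognisable languages, each definable at some bounded depth), and \cref{Lemma-CofinalDirectedSystem} then gives $\profin{\Sigma^*} = \varprojlim_k (S_k, \msotimes_k)$. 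Matching the multiplication on both descriptions of the inverse limit completes the proof.

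\textbf{The main obstacle} I anticipate is verifying the cofinality claim in $\FCon(\Sigma^*)$ cleanly — i.e. that the syntactic congruence of every recognisable language is refined by $\approx_k$-equivalence for $k$ the quantifier depth of a defining sentence — and dually that the relation-theoretic condition for morphisms in $\BoolComp$ (namely $S^{-1}(f(z)) = f(R^{-1}(z))$) is preserved in passing to the limit, so that the stagewise monoid maps genuinely exhibit $R_{\lift\conc}$ as the inverse-limit relation rather than merely something with the right projections. Both are bookkeeping-heavy but conceptually routine given \cref{Prop-kequivcongruence} and \cref{Theorem-TMSigmaCorrect}; the genuine mathematical content has already been front-loaded into those results.
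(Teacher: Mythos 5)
Your proposal is correct and follows essentially the same route as the paper: B\"uchi's theorem to identify $\Rec(\Sigma)$ with the Lindenbaum--Tarski algebra of $T_{\MSO(\Sigma^*)}$ (so prime filters become complete extensions, i.e.\ points of $S(T_{\MSO(\Sigma^*)})$), \cref{Prop-kequivcongruence} to make the $S_k$ into finite discrete monoids, cofinality of the $\approx_k$ in $(\Con_{\omega}(\Sigma^*),\supseteq)$, and \cref{Lemma-CofinalDirectedSystem} to identify the inverse limit with $\profin{\Sigma^*}$. The only substantive difference is that the paper verifies the dual ternary relation is the graph of $\msotimes$ directly on the prime filter space --- showing $\dangle{x \lift{+} y} = x \msotimes y$ by decomposing a depth-$k$ sentence in $x \msotimes y$ into a finite disjunction of sentences $\phi + \psi$ with $\phi \in x$, $\psi \in y$ --- rather than stagewise on the $S_k$, which disposes of the limit-passage worry you flag; note also that you state the cofinality condition backwards (it is $\approx_k$ that refines the given finite-index congruence, not the reverse), though the parenthetical argument you give for it is the correct one.
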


Here the Boolean space underlying $\profin{\Sigma^*}$ is viewed together with the graph of the monoid operation which it carries, which is a compatible ternary relation.
It is said in \cite{GGP} that a proof would require advanced machinery from duality theory. 
This theorem can be viewed as a special case of the following considerably more general theorem (by taking $A$ to be $\Sigma^*$).

\begin{theorem}[\cite{GehrkeSTR} Theorem 4.5]
Let $A$ be an abstract algebra. The profinite completion $\profin{A}$ (which is defined in a similar way to the profinite completion of a monoid in \cref{Def-Profinite}) is, up to isomorphism as topological algebras, the extended Stone dual of $\Rec(A)$, the Boolean algebra of recognisable subsets of $A$ equipped with residuation operations of recognisable subsets.
\end{theorem}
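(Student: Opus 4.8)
The plan is to prove the theorem through extended Stone duality rather than the logical methods of the previous sections, since for a general abstract algebra $A$ there is no analogue of the concatenation congruence of \cref{Prop-kequivcongruence} to exploit. The argument divides into two largely independent parts: first identify the Boolean space underlying the extended dual of $\Rec(A)$ with the Boolean space underlying $\profin{A}$, and then match the algebraic structure. I would work with the duality of \cref{Theorem-ExtendedStoneDuality}, after first extending it (routinely) from one binary operation to the signature of $A$, so that each $n$-ary basic operation corresponds to an $(n+1)$-ary compatible relation on the dual space. The only real adjustment is that the operators carried by $\Rec(A)$ are the \emph{residuals} of the complexified operations, which are meet-preserving rather than join-preserving; these are nonetheless recovered from the very same relation as universally-quantified (adjoint) operators, so the content of the duality — that the relation encodes all of the operator data — is unchanged.

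For the space, I would first record that $\profin{A} = \varprojlim_{\rho \in \FCon(A)} A/\rho$ (\cref{Def-Profinite}) is a Boolean space whose clopens are precisely the cylinder sets $\profin{\pi}_\rho^{-1}(F)$ with $\rho \in \FCon(A)$ and $F \subseteq A/\rho$; this is the standard description of the clopens of an inverse limit of finite discrete spaces, finite unions of cylinders being collapsed onto a single cylinder over a common refinement (using directedness of $\FCon(A)$, cf.\ \cref{Lemma-CofinalDirectedSystem}). On the other side, every recognisable set has the form $\pi_\rho^{-1}(F)$ for $\rho = \ker f \in \FCon(A)$, so the assignment $\Phi : \pi_\rho^{-1}(F) \mapsto \profin{\pi}_\rho^{-1}(F)$ is a candidate Boolean isomorphism $\Rec(A) \to \Clop(\profin{A})$. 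I would verify that it is well defined and injective by pulling back along the canonical dense map $\eta : A \to \profin{A}$ (so that $\eta^{-1}(\Phi(L)) = L$ and a clopen is pinned down by its trace on the dense set $\eta(A)$), surjective by the cylinder description, and a Boolean homomorphism by passing to a common $\rho$. Ordinary Stone duality, i.e.\ the Boolean reduct of \cref{Theorem-ExtendedStoneDuality}, then promotes $\Phi$ to a homeomorphism $\PrimF(\Rec(A)) \cong \profin{A}$.

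The heart of the proof, and the step I expect to be the main obstacle, is matching the operations. Each basic operation $f$ of $A$ lifts to a continuous operation $\profin{f}$ on $\profin{A}$ — the inverse limit of the finite discrete operations on the $A/\rho$ — so its graph $R_{\profin{f}}$ is a closed, compatible relation and makes $\profin{A}$ an object of the extended $\BoolComp$ category. The subtlety, and the reason the theorem is phrased via residuation rather than the complexification $\lift{f}$ that sufficed in the free-monoid case of \cref{Theorem-GGP}, is that for a general algebra the complexification of recognisable sets can fail to be recognisable, whereas the residual operators always preserve $\Rec(A)$. I would therefore show that the residual operators read off from the relation $R_{\profin{f}}$ on the clopens $\Phi(L_1),\dots,\Phi(L_n)$ coincide, after pulling back along $\eta$, with the residuals of $L_1,\dots,L_n$ defined directly from the operation $f$ on $A$.

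The decisive technical point is transferring a universally-quantified computation from $\profin{A}$, where the quantifiers range over limit points, down to the dense copy $\eta(A)$ of $A$. I would handle this with density and continuity: since each $\Phi(L_i)$ and the target clopen all factor through a single finite quotient $A/\rho$, continuity of $\profin{f}$ forces the membership condition defining a residual to be decided already in $A/\rho$, and hence to be equivalent to the corresponding condition on genuine elements of $A$. With the operators thus identified, extended Stone duality — now including the relational datum — upgrades the homeomorphism $\PrimF(\Rec(A)) \cong \profin{A}$ to an isomorphism of topological algebras, which is exactly the statement of the theorem. The passage from a single binary operation to an arbitrary signature is then purely bookkeeping: the same argument applies to each basic operation of $A$ independently.
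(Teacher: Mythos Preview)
The paper does not prove this theorem. It is stated as a citation of \cite[Theorem~4.5]{GehrkeSTR}, immediately followed by the remarks ``Discussion of residuation is outside the scope of this article'' and ``A full proof of this theorem is presented at length in \cite{GehrkeSTR}.'' The paper's own contribution is a proof of the \emph{special case} \cref{Theorem-GGP} (the free monoid $\Sigma^*$), carried out by the model-theoretic route via $T_{\MSO(\Sigma^*)}$, $\msotimes$, and the cofinality of the $\approx_k$ congruences. So there is no ``paper's own proof'' of the general statement for you to be compared against.

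Your sketch is, in outline, the standard duality-theoretic argument one would expect to find in \cite{GehrkeSTR}: identify $\Clop(\profin{A})$ with $\Rec(A)$ via cylinder sets and the dense map $\eta:A\to\profin{A}$, then match the relational/operator data. You have correctly identified the one genuinely nontrivial point, namely that for an arbitrary algebra the complexified operations need not preserve $\Rec(A)$, which is precisely why the theorem is formulated with residuation operations instead; and your plan to reduce the universally-quantified residual condition to a finite quotient via continuity is the right mechanism. That is a coherent proof outline, but it is not something this paper undertakes, and it is exactly the ``advanced machinery from duality theory'' the paper says it is avoiding in favour of its logical proof of the monoid case.
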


Discussion of residuation is outside the scope of this article, and is unnecessary for the specific cases we are interested in (free profinite monoids on finitely many generators).
A full proof of this theorem is presented at length in \cite{GehrkeSTR} alongside other results from duality theory.
The purpose of this section is to offer a proof of the more specific theorem from \cite{GGP}, making use of our model-theoretic analysis of $\Th(\MSO(\Sigma^*))$ to present a concise proof which does not rely on advanced machinery from duality theory.

\begin{definition}\label{Def-Lindenbaum}
Let $T$ be an $\lang{}{}$-theory. 
The \textbf{Lindenbaum-Tarski algebra of $T$}, which we denote by $\LT(T)$, is defined to be the Boolean algebra of equivalence classes of $\lang{}{}$-sentences under the equivalence relation $\sim_T$ given by $\phi \sim_T \psi$ if and only if,
\[
T \models \phi \leftrightarrow \psi.
\]
Note that this tacitly exploits the fact that $\sim_T$ is a congruence (see \cref{Def-Congruence}, the generalisation of which to the setting of universal algebra is straightforward) for the Boolean operations of conjunction, disjunction, and negation.
When working with $\LT(T)$ we  write $\phi$ in place of $[\phi]_{\sim_T}$ to avoid bloated notation.
\end{definition}

The following is a concise (but anachronistic) formulation of a theorem of B\"uchi on the monadic second order logic of finite words. It conceals the automata theoretic machinery used in the original proofs.
It is also worth noting that B\"uchi did not work in the signature $\lang{\Sigma}{\msotimes}$ which we make use of here.

\begin{theorem}[B\"uchi]\label{Theorem-BuchiMSOTheorem}
Let $\Sigma$ be a finite set. 
The map $\FSpec: \LT(T_{\MSO(\Sigma^*)}) \rightarrow \Rec(\Sigma)$ given by $\phi \mapsto \{w \in \Sigma^*: \MSO(w) \models \phi\}$ is an isomorphism of Boolean algebras.
\end{theorem}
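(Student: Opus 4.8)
The plan is to prove that $\FSpec$ is a well-defined injective Boolean algebra homomorphism with image exactly $\Rec(\Sigma)$, handling these three properties in turn. The key bridge between the logical side and the combinatorial side is the theory $T_{\MSO(\Sigma^*)}$, which by \cref{Theorem-TMSigmaCorrect} axiomatises $\bigcap_{w \in \Sigma^*}\Th(\MSO(w))$; this lets us translate statements about provability over $T_{\MSO(\Sigma^*)}$ into statements about the classes $\{w \in \Sigma^* : \MSO(w)\models\phi\}$.

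First I would check that $\FSpec$ is well-defined and a Boolean homomorphism. Well-definedness is exactly the statement that $T_{\MSO(\Sigma^*)}\models\phi\leftrightarrow\psi$ implies $\{w:\MSO(w)\models\phi\}=\{w:\MSO(w)\models\psi\}$, which is immediate since each $\MSO(w)$ is a model of $T_{\MSO(\Sigma^*)}$. That $\FSpec$ respects $\wedge,\vee,\neg$ (and sends $\top$ to $\Sigma^*$, $\bot$ to $\emptyset$) is routine from the semantics of the connectives, since $\MSO(w)\models\phi\wedge\psi$ iff $\MSO(w)\models\phi$ and $\MSO(w)\models\psi$, etc. Injectivity is the crucial direction and is where \cref{Theorem-TMSigmaCorrect} really does the work: if $\FSpec(\phi)=\FSpec(\psi)$ then every word $w$ satisfies $\MSO(w)\models\phi\leftrightarrow\psi$, so $\phi\leftrightarrow\psi\in\bigcap_{w\in\Sigma^*}\Th(\MSO(w))=T_{\MSO(\Sigma^*)}$ (as a consequence, using that $T_{\MSO(\Sigma^*)}$ axiomatises this theory), whence $\phi\sim_{T_{\MSO(\Sigma^*)}}\psi$, i.e. $[\phi]=[\psi]$ in $\LT(T_{\MSO(\Sigma^*)})$.

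It remains to identify the image with $\Rec(\Sigma)$. For the inclusion $\operatorname{Im}(\FSpec)\subseteq\Rec(\Sigma)$: given $\phi$ of quantifier depth $k$, the set $\{w:\MSO(w)\models\phi\}$ is a union of $\approx_k$-classes of the form $\MSO(w)$, and since by \cref{Def-BoundedTypeSpace} the quotient map $w\mapsto[\MSO(w)]_{\approx_k}\in S_k$ is a homomorphism from $\Sigma^*$ onto the finite monoid $(S_k,\msotimes_k)$ with $\phi$ picking out a subset of $S_k$, the language $\FSpec(\phi)$ is recognised by this finite monoid, hence recognisable. For the reverse inclusion $\Rec(\Sigma)\subseteq\operatorname{Im}(\FSpec)$: given $L = f^{-1}(F)$ for a homomorphism $f:\Sigma^*\to S$ onto a finite monoid (or semigroup), one must produce a sentence $\phi_L$ with $\{w:\MSO(w)\models\phi_L\}=L$; the standard argument writes, in monadic second order logic, that there is a labelling of the positions of the word by elements of $S$ that is consistent with the multiplication table of $S$, with the value at the maximal position lying in $F$ — existential set quantifiers range over the preimages of each element of $S$, so finitely many monadic variables suffice. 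One should be slightly careful about the empty word and about semigroups versus monoids, but these are handled by small adjustments.

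The main obstacle is the reverse inclusion $\Rec(\Sigma)\subseteq\operatorname{Im}(\FSpec)$: encoding an arbitrary recognising homomorphism as an $\lang{\Sigma}{\msotimes}$-sentence requires care with the specific signature in use here (which has $\exle$ on sets rather than a raw order on positions, and uses the uppercase/lowercase relativisation convention), and with boundary cases like the empty word and the treatment of the endpoints $0$ and $0^*$. Everything else — well-definedness, the homomorphism property, injectivity via \cref{Theorem-TMSigmaCorrect}, and the forward inclusion via the finite monoids $S_k$ — is comparatively mechanical.
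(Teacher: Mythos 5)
The paper does not actually prove this theorem: it is imported as a known result of B\"uchi, with the explicit remark that the statement ``conceals the automata theoretic machinery used in the original proofs.'' So your proposal is not competing with an argument in the text, and on its own terms it is a sound outline. The decomposition into well-definedness, the homomorphism property, injectivity, and identification of the image is the right one, and you have correctly located where the paper's own machinery does the work: injectivity is exactly the hard direction of \cref{Theorem-TMSigmaCorrect} (that $\bigcap_{w}\Th(\MSO(w))$ is contained in the set of consequences of $T_{\MSO(\Sigma^*)}$), and using it is not circular since that theorem is established before B\"uchi's theorem is ever invoked. The forward inclusion $\Image(\FSpec)\subseteq\Rec(\Sigma)$ via the finite monoid $(S_k,\msotimes_k)$ of \cref{Def-BoundedTypeSpace} is also correct: \cref{Prop-kequivcongruence} together with $\MSO(w\cnct v)\cong\MSO(w)\msotimes\MSO(v)$ makes $w\mapsto[\MSO(w)]_{\approx_k}$ a monoid homomorphism, and the Fra\"iss\'e--Hintikka theorem gives finiteness of $S_k$, so a sentence of quantifier depth $k$ defines a preimage of a subset of $S_k$. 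The only part with real remaining content is the reverse inclusion $\Rec(\Sigma)\subseteq\Image(\FSpec)$: the standard encoding of a recognising homomorphism $f:\Sigma^*\to S$ by monadic variables $X_s$ ($s\in S$) carrying the prefix products does go through in $\lang{\Sigma}{\msotimes}$ (atoms play the role of positions, $\exle$ restricted to atoms is the order, and the theory guarantees a largest atom $0^*$ at which membership in $F$ is tested), but it is only sketched here, and you have flagged that honestly. I would only add that the paper's definition of recognisability uses finite semigroups, so the empty-word and monoid-versus-semigroup adjustments you mention do need a sentence each in a full write-up.
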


In the following proposition we exploit the shorthand for restrictions and relativisations laid out in \cref{Lemma-ResRel}.

\begin{proposition}\label{Prop-SPECiso}
The map $\FSpec$ is still an isomorphism if we enrich the two Boolean algebras as follows,
\begin{enumerate}[(i)]
\item to $\LT(T_{\MSO(\Sigma^*)})$ we append the operation $+$ given by,
\[
\phi + \psi \coloneqq \exists X (\dwcl(X) \wedge \phi \downharpoonright X \wedge \psi \downharpoonright X^{\compl}),
\]
where $\dwcl(X)$ (\textbf{d}own\textbf{w}ards \textbf{cl}osed) is the formula,
\[
\forall y,z ((X(y) \wedge z \exle y) \rightarrow X(z)).
\]
\item to $\Rec(\Sigma)$ we append the operation $\lift{\cnct}$ given by,
\[
L_1 \lift{\cnct} L_2 \coloneqq \{w \cnct v :w \in L_1, v \in L_2\}.
\]
\end{enumerate}
\end{proposition}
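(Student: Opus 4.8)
The plan is to lean on \cref{Theorem-BuchiMSOTheorem}: since $\FSpec$ is already known to be a Boolean algebra isomorphism, it remains only to check that it intertwines the two appended operations, i.e.\ that
\[
\FSpec(\phi + \psi) = \FSpec(\phi) \lift{\cnct} \FSpec(\psi)
\]
for all $\lang{\Sigma}{\msotimes}$-sentences $\phi,\psi$. Since $\FSpec$ is a bijection, this single identity simultaneously shows (via the injectivity from \cref{Theorem-BuchiMSOTheorem}) that $+$ is well-defined on $\sim_{T_{\MSO(\Sigma^*)}}$-classes and that the inverse bijection also respects $+$ and $\lift{\cnct}$; so establishing the displayed identity suffices.

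To prove it I would fix $w \in \Sigma^*$ and unwind the definition of $\phi+\psi$. The key observation is that an element $A$ of $\MSO(w)$ — that is, a subset $A \subseteq \alpha_w$ — satisfies $\dwcl$ exactly when $A$ is an initial segment of the linear order $\alpha_w$, in which case $A^{\compl} = \alpha_w \setminus A$ is the complementary final segment and $w = (w \upharpoonright A) \cnct (w \upharpoonright A^{\compl})$; conversely every factorisation $w = w_1 \cnct w_2$ arises from a unique such initial segment, namely the copy of $\alpha_{w_1}$ inside $\alpha_w$. The technical lemma I would isolate and verify first is that for every initial segment $A$ of $\alpha_w$ there are isomorphisms
\[
\MSO(w) \upharpoonright A \;\cong\; \MSO(w \upharpoonright A), \qquad \MSO(w) \upharpoonright A^{\compl} \;\cong\; \MSO(w \upharpoonright A^{\compl}),
\]
where $\MSO(w) \upharpoonright (-)$ is the substructure of \cref{Lemma-ResRel}. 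This is a routine check, relation by relation in the signature $\lang{\Sigma}{\msotimes}$: the universes agree on the nose ($\pow(A)$ and $\pow(A^{\compl})$ respectively), and one verifies that $\subseteq$, $\At$, $\bot$, each $P_{\sigma}$, and — the only slightly delicate point — $\exle$ restrict to the expected interpretations, in particular that $\exle$ on $\MSO(w) \upharpoonright A^{\compl}$ is the order of the final segment $A^{\compl}$ and is unaffected by the atoms lying in $A$.

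With this lemma in hand the identity follows by combining it with the relativisation property of \cref{Lemma-ResRel} (namely $\MSO(w) \models \phi \downharpoonright A$ iff $\MSO(w) \upharpoonright A \models \phi$, and likewise for $A^{\compl}$). For the inclusion $\FSpec(\phi) \lift{\cnct} \FSpec(\psi) \subseteq \FSpec(\phi+\psi)$: given $w = w_1 \cnct w_2$ with $\MSO(w_1) \models \phi$ and $\MSO(w_2) \models \psi$, let $A$ be the associated initial segment; then $\MSO(w) \models \dwcl(A)$, and the lemma together with \cref{Lemma-ResRel} gives $\MSO(w) \models \phi \downharpoonright A$ and $\MSO(w) \models \psi \downharpoonright A^{\compl}$, so $\MSO(w) \models \phi + \psi$. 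For the reverse inclusion: any witness $A$ for $\MSO(w) \models \phi + \psi$ is an initial segment, hence gives the factorisation $w = (w \upharpoonright A) \cnct (w \upharpoonright A^{\compl})$ whose two factors satisfy $\phi$ and $\psi$ respectively, by the same two ingredients. I expect the only real work to be the bookkeeping in the isomorphism lemma (especially the interpretation of $\exle$ on the two substructures) and the handling of the degenerate cases $A = \bot$ and $A = \top$, where one factor is $\ew$; beyond \cref{Theorem-BuchiMSOTheorem}, \cref{Lemma-ResRel}, and the correspondence between downward-closed subsets and factorisations, no new ideas are needed.
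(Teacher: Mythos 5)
Your proposal is correct and follows essentially the same route as the paper's own proof: both reduce to the single identity $\FSpec(\phi+\psi)=\FSpec(\phi)\lift{\cnct}\FSpec(\psi)$ via \cref{Theorem-BuchiMSOTheorem} and then establish it through the correspondence between downward-closed witnesses $A$ and factorisations $w=v_1\cnct v_2$. The only difference is one of presentation: you make explicit (as an isolated lemma, together with the remark on well-definedness of $+$ on $\sim_T$-classes and the degenerate cases $A=\bot,\top$) the isomorphisms $\MSO(w)\upharpoonright A\cong\MSO(v_1)$ and $\MSO(w)\upharpoonright A^{\compl}\cong\MSO(v_2)$ that the paper simply declares to be clear.
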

\begin{proof}
All that is required, in light of \cref{Theorem-BuchiMSOTheorem}, is to show that for any $\phi,\psi \in \LT$, 
\[
\FSpec(\phi + \psi) = \FSpec(\phi) \lift{\cnct} \FSpec(\psi).
\]
If $w \in \FSpec(\phi + \psi)$ this means that $\MSO(w) \models \phi + \psi$, i.e. there is $A \in \MSO(w)$ such that,
\[
\MSO(w) \models \dwcl(A) \wedge \phi \downharpoonright A \wedge \psi \downharpoonright A^{\compl}.
\]
Therefore we get that $\MSO(w) \upharpoonright A \models \phi$ and $\MSO(w) \upharpoonright A^{\compl} \models \psi$.
Now it is clear, since $\MSO(w) \models \dwcl(A)$, that $\MSO(w) \upharpoonright A \cong \MSO(v_1)$ and $\MSO(w) \upharpoonright A^{\compl} \cong \MSO(v_2)$ for some $v_1,v_2 \in \Sigma^*$ such that $v_1 \cnct v_2 = w$.
As $\MSO(w) \models \phi \downharpoonright A$ is equivalent to $\MSO(v_1) \models \phi$ and $\MSO(w) \models \psi \downharpoonright A^{\compl}$ is equivalent to $\MSO(v_2) \models \psi$, we therefore have $w \in \FSpec(\phi) \lift{\cnct} \FSpec(\psi)$.

For the converse suppose that $w \in \FSpec(\phi) \lift{\cnct} \FSpec(\psi)$. 
Then there exists $v_1 \in \FSpec(\phi)$ and $v_2 \in \FSpec(\psi)$ such that $w = v_1 \cnct v_2$.
Then it is clear that there exists $A \in \MSO(w)$, namely the initial segment corresponding to $v_1$, such that,
\[
\MSO(w) \models \dwcl(A) \wedge \phi \downharpoonright A \wedge \psi \downharpoonright A^{\compl}.
\]
Hence $w \in \FSpec(\phi + \psi)$ as required.
\end{proof}

\Cref{Prop-SPECiso} says that $(\Rec(\Sigma),\lift{\cnct})$ and $(\LT(T_{\MSO(\Sigma^*)}),+)$ differ only superficially.
So towards a proof of \cref{Theorem-GGP} we can consider the extended Stone dual of $(\LT(T_{\MSO(\Sigma^*)}),+)$.

\begin{theorem}
The topological monoid $(S(T_{\MSO(\Sigma^*)}),\msotimes)$ is the extended Stone dual of the Boolean algebra with operator $(\LT(T_{\MSO(\Sigma^*)}),+)$.
\end{theorem}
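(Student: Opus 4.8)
The plan is to deduce the statement from extended Stone duality (\cref{Theorem-ExtendedStoneDuality}). First, $(\LT(T_{\MSO(\Sigma^*)}),+)$ is an object of $\BAlgO$: by \cref{Theorem-BuchiMSOTheorem} and \cref{Prop-SPECiso} it is isomorphic to $(\Rec(\Sigma),\lift{\cnct})$, and $\lift{\cnct}$ is normal and additive. So the functor $F$ of \cref{Theorem-ExtendedStoneDuality} sends it to the object $(\PrimF(\LT(T_{\MSO(\Sigma^*)})),R_{+})$ of $\BoolComp$, where $R_{+}=\{(p,q,r):\phi+\psi\in r\text{ for all }\phi\in p,\ \psi\in q\}$. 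Recalling that $S(T_{\MSO(\Sigma^*)})$ is the inverse limit $\varprojlim_{k}(S_{k}(T_{\MSO(\Sigma^*)}),\msotimes_{k})$ with $\msotimes=\varprojlim_{k}\msotimes_{k}$, it then suffices to produce an isomorphism in $\BoolComp$ from $S(T_{\MSO(\Sigma^*)})$, equipped with the graph $R_{\msotimes}\coloneqq\{(p,q,p\msotimes q)\}$ of $\msotimes$, to $(\PrimF(\LT(T_{\MSO(\Sigma^*)})),R_{+})$ --- this will in particular show that $R_{\msotimes}$ is a compatible ternary relation. I would do this in two steps.

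\emph{Step 1 (the underlying spaces).} A prime filter of $\LT(T_{\MSO(\Sigma^*)})$ is the same as a complete theory $\Gamma$ extending $T_{\MSO(\Sigma^*)}$, and I claim these are in natural bijection with the points of $\varprojlim_{k}S_{k}(T_{\MSO(\Sigma^*)})$. Indeed, by the Fra\"iss\'e--Hintikka theorem $\Gamma$ contains a unique Hintikka sentence of each quantifier depth $k$, whose class of models is a single $\approx_{k}$-class; this class lies in $S_{k}(T_{\MSO(\Sigma^*)})$ because by \cref{Theorem-TMSigmaCorrect} any model of $\Gamma$ is $\approx_{k}$-equivalent to some $\MSO(w)$, and the classes cohere under the parent maps, so $\Gamma$ yields a point of the limit; conversely a coherent sequence of classes determines, by compactness, a complete theory extending $T_{\MSO(\Sigma^*)}$. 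These assignments are mutually inverse, and the basic clopen $\{\Gamma:\phi\in\Gamma\}$ (for $\qd(\phi)\le k$) corresponds to $\pi_{k}^{-1}(\dangle{\phi})$, so the bijection is a homeomorphism. This step is routine.

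\emph{Step 2 (the relations).} Under this identification I must show that, for complete extensions $\Gamma_{p},\Gamma_{q},\Gamma_{r}$ of $T_{\MSO(\Sigma^*)}$ and models $\str{M}\models\Gamma_{p}$, $\str{N}\models\Gamma_{q}$,
\[
\big(\phi+\psi\in\Gamma_{r}\text{ for all }\phi\in\Gamma_{p},\ \psi\in\Gamma_{q}\big)\iff\Gamma_{r}=\Th(\str{M}\msotimes\str{N}),
\]
the right-hand side being the condition ``$r=p\msotimes q$'' (by \cref{Prop-kequivcongruence} the operation $\msotimes$ on $\varprojlim_{k}S_{k}$ is the one induced by $\msotimes$ on structures). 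The implication ``$\Leftarrow$'' follows from \cref{Prop-SPECiso}: given $\phi\in\Gamma_{p}$, $\psi\in\Gamma_{q}$ and $k>\qd(\phi+\psi)$, \cref{Theorem-TMSigmaCorrect} yields $v_{1},v_{2}\in\Sigma^{*}$ with $\MSO(v_{1})\approx_{k}\str{M}$ and $\MSO(v_{2})\approx_{k}\str{N}$; then $\MSO(v_{1})\models\phi$, $\MSO(v_{2})\models\psi$, so $\MSO(v_{1}\cnct v_{2})\models\phi+\psi$ by the proof of \cref{Prop-SPECiso}, and since $\MSO(v_{1}\cnct v_{2})\cong\MSO(v_{1})\msotimes\MSO(v_{2})\approx_{k}\str{M}\msotimes\str{N}$ we get $\str{M}\msotimes\str{N}\models\phi+\psi$. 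For ``$\Rightarrow$'', fix $k$ and let $\theta_{M}\in\Gamma_{p}$, $\theta_{N}\in\Gamma_{q}$, $\theta$ be the depth-$k$ Hintikka sentences of $\str{M}$, $\str{N}$, $\str{M}\msotimes\str{N}$ respectively; the hypothesis gives $\theta_{M}+\theta_{N}\in\Gamma_{r}$. I would then check that $T_{\MSO(\Sigma^*)}\models(\theta_{M}+\theta_{N})\to\theta$: if $\str{L}\models T_{\MSO(\Sigma^*)}$ satisfies $\theta_{M}+\theta_{N}$, then for $k'$ large $\str{L}\approx_{k'}\MSO(w)$ for some $w$ (\cref{Theorem-TMSigmaCorrect}), so $\MSO(w)\models\theta_{M}+\theta_{N}$, hence $w=v_{1}\cnct v_{2}$ with $\MSO(v_{1})\models\theta_{M}$ and $\MSO(v_{2})\models\theta_{N}$ by the proof of \cref{Prop-SPECiso}, so $\MSO(v_{1})\approx_{k}\str{M}$, $\MSO(v_{2})\approx_{k}\str{N}$, and then $\MSO(w)\cong\MSO(v_{1})\msotimes\MSO(v_{2})\approx_{k}\str{M}\msotimes\str{N}$ by \cref{Prop-kequivcongruence}, whence $\MSO(w)\models\theta$ and $\str{L}\models\theta$. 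Thus $\theta\in\Gamma_{r}$; as $k$ was arbitrary, $\Gamma_{r}$ and $\Th(\str{M}\msotimes\str{N})$ contain the same Hintikka sentences at every depth, so they coincide.

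Combining the two steps, the homeomorphism of Step 1 carries $R_{\msotimes}$ onto $R_{+}$ and is therefore an isomorphism in $\BoolComp$; in particular $R_{\msotimes}$ is compatible, and since $(\PrimF(\LT(T_{\MSO(\Sigma^*)})),R_{+})$ is by construction the extended Stone dual of $(\LT(T_{\MSO(\Sigma^*)}),+)$, so is $(S(T_{\MSO(\Sigma^*)}),\msotimes)$. I expect the main obstacle to be the ``$\Rightarrow$'' direction of Step 2: one must recognise that imposing the defining condition of $R_{+}$ only on the finitely many Hintikka sentences of each depth already forces $\Gamma_{r}$ to be the product theory, and the device that makes this work --- dropping down to a finite-word approximation via \cref{Theorem-TMSigmaCorrect} and then invoking that $\approx_{k}$ is a congruence for concatenation (\cref{Prop-kequivcongruence}), with attention to the quantifier depth of the relativised sentence $\phi+\psi$ --- is where the care is needed.
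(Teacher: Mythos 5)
Your proof is correct and follows essentially the same route as the paper: both reduce the theorem to showing that the dual relation $R_+$ is the graph of $\msotimes$, establish $x \lift{+} y \subseteq x \msotimes y$, and then use depth-$k$ Hintikka sentences together with \cref{Prop-kequivcongruence} and the concatenation formula of \cref{Prop-SPECiso} to show that any prime filter containing $x \lift{+} y$ must already be $x \msotimes y$. The only cosmetic differences are that the paper witnesses $\str{M} \msotimes \str{N} \models \phi + \psi$ directly via the downward-closed element $(\top,\bot)$ rather than passing through finite-word approximants, and leaves your Step 1 identification of $\PrimF(\LT(T_{\MSO(\Sigma^*)}))$ with the inverse limit implicit.
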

\begin{proof}
The extended Stone dual of $(\LT(T_{\MSO(\Sigma^*)}),+)$ is, by definition (see \cref{Theorem-ExtendedStoneDuality}), the Boolean space with compatible operation $(S(T_{\MSO(\Sigma^*)}),R_+)$, where $R_+$ is the ternary relation on $S(T_{\MSO(\Sigma^*)})$ given by,
\[
R_+(x,y,z) \Leftrightarrow x \lift{+} y \subseteq z.
\]
It is therefore enough to show that for each $x,y \in S(T_{\MSO(\Sigma^*)})$, $x \msotimes y$ is the unique \ifULTRAMODE{ultrafilter }\else{prime filter }\fi on $\LT(T_{\MSO(\Sigma^*)})$ such that, $x \lift{+} y \subseteq x \msotimes y$.
In other words, we will show that $R_+$ is the graph of $\msotimes$.

We will in fact show that $\dangle{x \lift{+} y} = x \msotimes y$, where $\dangle{x \lift{+} y}$ is the filter generated by the set $x \lift{+} y$. 
Uniqueness then follows from the fact that there can be no proper inclusions of \ifULTRAMODE{ultrafilter}\else{prime filter}\fi s of a Boolean algebra\ifULTRAMODE{.}\else{(recall that the notions of prime filter and maximal filter coincide for Boolean algebras).}\fi 

Firstly we show $x \lift{+} y \subseteq x \msotimes y$. 
As $x \msotimes y$ is a filter this implies that $\dangle{x \lift{+} y} \subseteq x \msotimes y$.
Let $\phi \in x$ and $\psi \in y$ be given. 
Recall that $\phi + \psi$ is defined as the sentence,
\[
\exists X (\dwcl(X) \wedge \phi \downharpoonright X \wedge \psi \downharpoonright X^{\compl}).
\]
Let $\str{M} \models x$ and $\str{N} \models y$, so that $\str{M} \msotimes \str{N} \models x \msotimes y$.
Then consider the element $A = (\top,\bot) \in \str{M} \msotimes \str{N}$. 
It is immediate that $A^{\compl} = (\bot,\top)$, $\str{M} \msotimes \str{N} \upharpoonright A \cong \str{M}$, and $\str{M} \msotimes \str{N} \upharpoonright A^{\compl} \cong \str{N}$.
Therefore,
\[
\str{M} \msotimes \str{N} \models \dwcl(A) \wedge \phi \downharpoonright A \wedge \psi \downharpoonright A^{\compl}.
\]
So $\str{M} \msotimes \str{N} \models \phi + \psi$, and hence $\phi + \psi \in x \msotimes y$ as required.

To conclude we must show that $x \msotimes y \subseteq \dangle{x \lift{+} y}$. 
We show that each sentence $\eta \in x \msotimes y$ is equivalent over $T_{\MSO(\Sigma^*)}$ to a finite disjunction of sentences contained in $x \lift{+} y$.
Take $\eta \in x \msotimes y$.
Let $\str{M} \models x$ and $\str{N} \models y$, so that $\str{M} \msotimes \str{N} \models \eta$. 
We want to show that there exist $m \in \bb{N}$, $\phi_1,\ldots,\phi_m \in x$, and $\psi_1,\ldots,\psi_m \in y$ such that for $i=1,\ldots,m$,
\[
T_{\MSO(\Sigma^*)} \models \eta \leftrightarrow \bigvee_{i=1}^m (\phi_i + \psi_i).
\] 
Without loss of generality we can assume that $\eta$ is an unnested sentence of quantifier rank $k \in \bb{N}$. % (\cref{Remark-UnnestedFormula}). 
The equivalence relation $\approx_k$ on the class of $\lang{\Sigma}{\msotimes}$-structures admits finitely many equivalence classes, each of which is axiomatised by an $\lang{\Sigma}{\msotimes}$-sentence. 
Taking the finite disjunction of all sentences of the form $\phi + \psi$, where $\phi \in x$ and $\psi \in y$ each axiomatise an equivalence class of $\approx_k$, which $\msotimes_k$ (see \cref{Def-BoundedTypeSpace}) sends to an equivalence class of $\approx_k$ containing a model of $\eta$. 
This disjunction is equivalent to $\eta$ over $T_{\MSO(\Sigma^*)}$ so we are done. 
\end{proof}

\begin{remark}
By taking $w \approx_k v$ if and only if $\MSO(w) \approx_k \MSO(v)$, for each $w,v \in \Sigma^*$, we may think of $\approx_k$ as an element of $\Con_{\omega}(\Sigma^*)$. 
In \cref{Theorem-EFcofinal} and the remainder of the paper we will switch freely between these two views of $\approx_k$.
\end{remark}

\begin{theorem}\label{Theorem-EFcofinal}
The congruences $\approx_k$ for $k \in \bb{N}$ form a cofinal subset of the poset $(\Con_{\omega}(\Sigma^*),\supseteq)$.
\end{theorem}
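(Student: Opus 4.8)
The plan is to derive cofinality directly from B\"uchi's theorem (\cref{Theorem-BuchiMSOTheorem}). Unwinding the definition of cofinality in the poset $(\Con_{\omega}(\Sigma^*),\supseteq)$, what must be shown is: for every finite-index congruence $\rho$ on $\Sigma^*$ there is a $k \in \bb{N}$ with $\rho \supseteq \approx_k$, i.e.\ with $\approx_k$ refining $\rho$. So fix $\rho \in \Con_{\omega}(\Sigma^*)$ and let $C_1,\ldots,C_n$ be its (finitely many) equivalence classes. First I would observe that each $C_i$ is a recognisable $\Sigma$-language: the canonical quotient map $\pi_{\rho}:\Sigma^* \rightarrow \sfrac{\Sigma^*}{\rho}$ is a homomorphism onto a finite monoid, and $C_i = \pi_{\rho}^{-1}(\{[w]_{\rho}\})$ for any $w \in C_i$. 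Hence, since $\FSpec$ is an isomorphism (\cref{Theorem-BuchiMSOTheorem}), for each $i \leq n$ there is an $\lang{\Sigma}{\msotimes}$-sentence $\phi_i$ with $C_i = \{w \in \Sigma^* : \MSO(w) \models \phi_i\}$. Fix such representatives and set $k \coloneqq \max_{1 \leq i \leq n} \qd(\phi_i)$.

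Next I would check that this $k$ works. Suppose $w,v \in \Sigma^*$ satisfy $\MSO(w) \approx_k \MSO(v)$. For each $i \leq n$ we have $\qd(\phi_i) \leq k$, so $\MSO(w) \models \phi_i$ iff $\MSO(v) \models \phi_i$, that is, $w \in C_i$ iff $v \in C_i$. Since $C_1,\ldots,C_n$ partition $\Sigma^*$, this forces $w$ and $v$ to lie in the same class of $\rho$, i.e.\ $(w,v) \in \rho$. Viewing $\approx_k$ as a congruence on $\Sigma^*$ as in the remark preceding the statement, this says exactly that $\approx_k \subseteq \rho$, hence $\rho \supseteq \approx_k$. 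It remains to note that $\approx_k$ does lie in $\Con_{\omega}(\Sigma^*)$: it is a congruence for concatenation because $\MSO(w) \msotimes \MSO(v) \cong \MSO(w \conc v)$ together with \cref{Prop-kequivcongruence}, and it has finite index because the space $S_k$ is finite (\cref{Def-BoundedTypeSpace}).

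I do not expect a genuine obstacle here: the entire content is carried by B\"uchi's theorem, which converts the finitely many $\rho$-classes into finitely many monadic second order sentences, after which the uniform quantifier-depth bound $k$ is immediate. The only point needing a moment's care is the passage between the two guises of $\approx_k$ --- as an equivalence relation on $\lang{\Sigma}{\msotimes}$-structures and as a finite-index congruence on $\Sigma^*$ --- but this identification is exactly what the remark immediately before the statement records.
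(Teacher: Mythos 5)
Your proposal is correct and follows essentially the same route as the paper's own proof: both pass from the finitely many $\rho$-classes to recognisable languages, invoke B\"uchi's theorem to obtain axiomatising sentences, and take $k$ to be the maximum of their quantifier depths. The extra checks you include (that each class is recognisable via $\pi_{\rho}$, and that $\approx_k$ genuinely lies in $\Con_{\omega}(\Sigma^*)$) are sound and only make the argument more self-contained.
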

\begin{proof}
Let $\rho \in \Con_{\omega}(\Sigma^*)$. 
We must show that there is $k \in \bb{N}$ such that $w \approx_k w'$ implies $w \sim_{\rho} w'$ for each $w,w' \in \Sigma^*$.
Consider the quotient $\sfrac{\Sigma^*}{\rho}$.
For each $[w]_{\rho} \in \sfrac{\Sigma^*}{\rho}$, we have that the preimage $\pi_{\rho}^{-1}([w]_{\rho}) \subseteq \Sigma^*$ is a recognisable language.
We can enumerate these equivalence classes as $L_1,\ldots,L_n \subseteq \Sigma^*$.
Note that these equivalence classes form a partition of $\Sigma^*$.
By \cref{Theorem-BuchiMSOTheorem}, there are $\lang{\Sigma}{\msotimes}$-formulas $\phi_1,\ldots,\phi_n$ which axiomatise each of these regular languages, meaning for each $i$ such that $1 \leq i \leq n$ we have $L_i = \{w \in \Sigma^*: \MSO(w) \models \phi_i\}$.
Without loss of generality we may assume that each of the sentences $\phi_1,\ldots,\phi_n$ are unnested. 
Now let $k = \max_{1 \leq i \leq n} \qd(\phi_i)$.
We claim that $w \approx_k w'$ implies $w \sim_{\rho} w'$.
But this is immediate, for if $w \approx_k w'$ then we have $w  \models \phi_i$ if and only if $w' \models \phi_i$ for each $i$ such that $1 \leq i \leq n$, and hence there is $i$ such that $1 \leq i \leq n$ for which $w,w' \in L_i$. 
\end{proof}

\begin{theorem}\label{Theorem-Sinvlimchain}
The topological monoid $(S(T_{\MSO(\Sigma^*)}),\msotimes)$ is the inverse limit of the directed system of topological monoids given by restricting $F_{\Sigma^*}:\Con_{\omega}(\Sigma^*) \rightarrow \TopMon$ (see \cref{Def-Profinite}) to the cofinal subset $C$ of $(\Con_{\omega}(\Sigma^*),\supseteq)$ comprising the congruences $\approx_k$ for $k \in \bb{N}$.
\end{theorem}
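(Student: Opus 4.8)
The plan is to exhibit $(S(T_{\MSO(\Sigma^*)}),\msotimes)$, together with suitable projection maps, as the inverse limit of $F'$, where $F'$ denotes the restriction of $F_{\Sigma^*}$ to the cofinal (by \cref{Theorem-EFcofinal}) subset $C = \{\approx_k : k \in \bb{N}\}$; that is, I would verify the universal property directly. First I would make $F'$ concrete. For each $k$ the assignment $[w]_{\approx_k} \mapsto [\MSO(w)]_{\approx_k}$ is an isomorphism of finite discrete monoids from $F_{\Sigma^*}(\approx_k) = \sfrac{\Sigma^*}{\approx_k}$ onto $(S_k,\msotimes_k)$ (using $\MSO(w) \msotimes \MSO(v) \cong \MSO(w \conc v)$ and the definition of $\msotimes_k$ in \cref{Def-BoundedTypeSpace}), and for $l \le k$ it carries the parent map to the map $p_{k,l} : S_k \to S_l$ sending an $\approx_k$-class to the unique $\approx_l$-class containing it. On the dual side, $p_{k,l}$ is the Stone dual of the inclusion $B_l \subseteq B_k$, where $B_k$ is the finite Boolean subalgebra of $\LT(T_{\MSO(\Sigma^*)})$ consisting of classes of sentences of quantifier depth at most $k$; by \cref{Theorem-TMSigmaCorrect} every model of $T_{\MSO(\Sigma^*)}$ is $\approx_k$-equivalent to some $\MSO(w)$, so the atoms of $B_k$ are exactly the members of $S_k$, each $s \in S_k$ being axiomatised over $T_{\MSO(\Sigma^*)}$ by a quantifier-depth-$\le k$ sentence $\chi_s$. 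Since every sentence has finite quantifier depth, $\LT(T_{\MSO(\Sigma^*)}) = \bigcup_k B_k$.

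For the cone, given a point $p$ of $S(T_{\MSO(\Sigma^*)})$ — a prime filter of $\LT(T_{\MSO(\Sigma^*)})$, equivalently a complete type of $T_{\MSO(\Sigma^*)}$ — I would set $\profin{\pi}_k(p)$ to be the unique $s \in S_k$ with $\chi_s \in p$, which is the $\approx_k$-class of any model of $p$. Each $\profin{\pi}_k$ is continuous (the preimage of $\{s\}$ is the clopen $\{p : \chi_s \in p\}$), and the cone condition $p_{k,l} \circ \profin{\pi}_k = \profin{\pi}_l$ is immediate since a model of $p$ of $\approx_k$-class $\profin{\pi}_k(p)$ has $\approx_l$-class $p_{k,l}(\profin{\pi}_k(p))$. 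The key point is that $\profin{\pi}_k$ is a monoid homomorphism: for $p, q$ pick $\str{M} \models p$ and $\str{N} \models q$; by the preceding theorem (that $(S(T_{\MSO(\Sigma^*)}),\msotimes)$ is the extended Stone dual of $(\LT(T_{\MSO(\Sigma^*)}),+)$) we have $\str{M} \msotimes \str{N} \models p \msotimes q$, so
\[
\profin{\pi}_k(p \msotimes q) = [\str{M} \msotimes \str{N}]_{\approx_k} = [\str{M}]_{\approx_k} \msotimes_k [\str{N}]_{\approx_k} = \profin{\pi}_k(p) \msotimes_k \profin{\pi}_k(q),
\]
the middle equality being precisely \cref{Prop-kequivcongruence} together with the definition of $\msotimes_k$; and $\profin{\pi}_k$ sends the type of $\MSO(\ew)$ to the identity of $S_k$. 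Thus $(S(T_{\MSO(\Sigma^*)}),(\profin{\pi}_k)_k)$ is a cone over $F'$ in $\TopMon$.

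It then remains to verify universality. The $\profin{\pi}_k$ are jointly injective: any sentence has some finite quantifier depth $k$ and is $T_{\MSO(\Sigma^*)}$-equivalent to a disjunction of the $\chi_s$ with $s \in S_k$, so prime filters agreeing under all $\profin{\pi}_k$ coincide; hence a morphism of cones into $S(T_{\MSO(\Sigma^*)})$ is unique if it exists. For existence, given any cone $(N,(g_k)_k)$ over $F'$ in $\TopMon$, I would define, for $n \in N$,
\[
h(n) \coloneqq \{\phi \in \LT(T_{\MSO(\Sigma^*)}) : \chi_{g_k(n)} \le \phi \text{ in } \LT(T_{\MSO(\Sigma^*)}) \text{ for some } k \in \bb{N}\}.
\]
The compatibility $p_{k,l} \circ g_k = g_l$ gives $\chi_{g_k(n)} \le \chi_{g_l(n)}$ for $l \le k$, from which $h(n)$ is seen to be a proper, upward-closed, $\wedge$-closed subset; it is complete because $\chi_{g_k(n)}$ decides every quantifier-depth-$\le k$ sentence over the theory, so $h(n)$ is a prime filter with $\profin{\pi}_k(h(n)) = g_k(n)$ for all $k$. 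Continuity of $h$ follows from $h^{-1}(\{p : \phi \in p\}) = g_k^{-1}(\{s \in S_k : \chi_s \le \phi\})$ for a representative of $\phi$ of quantifier depth $k$, and $h$ being a monoid homomorphism follows by comparing under each $\profin{\pi}_k$ (using that $g_k$ and $\profin{\pi}_k$ are homomorphisms) and invoking joint injectivity. This realises $(S(T_{\MSO(\Sigma^*)}),\msotimes)$ with $(\profin{\pi}_k)_k$ as $\varprojlim F'$.

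I expect the main obstacle to be the monoid-homomorphism property of the projections $\profin{\pi}_k$: this is where \cref{Prop-kequivcongruence} is essential, and it relies on the fact, borrowed from the preceding theorem, that $\str{M} \msotimes \str{N}$ genuinely realises the product type $p \msotimes q$ (in particular that models of $T_{\MSO(\Sigma^*)}$ are closed under $\msotimes$); everything else is bookkeeping with filters and the presentation $\LT(T_{\MSO(\Sigma^*)}) = \bigcup_k B_k$. A more abstract route would observe that $\PrimF$, being half of the dual equivalence of \cref{Theorem-ExtendedStoneDuality} (on Boolean-algebra reducts), carries the directed colimit $\bigcup_k B_k$ to the inverse limit $\varprojlim S_k$, after which upgrading to $\TopMon$ again reduces to the projections being homomorphisms; but since $+$ does not restrict to the $B_k$ (one has $\qd(\phi + \psi)$ potentially exceeding $\max(\qd\phi,\qd\psi)$), the direct argument above is cleaner.
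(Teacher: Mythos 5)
Your proposal is correct and follows essentially the same route as the paper: both verify the universal property of the inverse limit directly, with projections sending a complete type to the $\approx_k$-class of any of its models, and with the mediating morphism determined by the compatible family $(g_k)$ (the paper invokes compactness where you explicitly build the prime filter generated by the $\chi_{g_k(n)}$, and it likewise checks the homomorphism and continuity properties by comparing under each projection). The extra details you supply --- the identification of $\sfrac{\Sigma^*}{\approx_k}$ with $(S_k,\msotimes_k)$ and the verification that the projections are monoid homomorphisms via \cref{Prop-kequivcongruence} --- are left implicit in the paper but are consistent with it.
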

\begin{proof}
For each $k$ we have a morphism (a continuous map which respects the binary operation) $p_k:S(T_{\MSO(\Sigma^*)}) \rightarrow S_k$ given by taking a completion of $T_{\MSO(\Sigma^*)}$ to the $\approx_k$ class of any of its models.
These morphisms commute with the projection maps which make up the directed system (see \cref{Def-Profinite}).

Now suppose $(X,\times)$ is some other topological monoid, and that for each $k \in \bb{N}$ we have a morphism $q_k:X \rightarrow S_k$.
Moreover suppose these morphisms $q_k$ commute with the projection maps. 
We must show that there exists a unique morphism $g:X \rightarrow S(T_{\MSO(\Sigma^*)})$ such that $p_k \circ g = q_k$ for each $k \in \bb{N}$.
Taking $x \in X$, we can form $\bar{q}(x) \coloneqq (q_k(x))_{k \in \bb{N}}$.
Similarly for $x \in S(T_{\MSO(\Sigma^*)})$ we can form $\bar{p}(x) \coloneqq (p_k(x))_{k \in \bb{N}}$.
By a straightforward application of the compactness theorem, for each $x$ in $X$ there exists $g(x) \in S(T_{\MSO(\Sigma^*)})$ such that $p_k \circ g (x)= q_k(x)$ for each $k \in \bb{N}$. 
That $g(x)$ is unique is clear. 

So far we have that $g$ is a well defined function, but we must check that it is also continuous and respects the binary operations.
Let $x,x' \in X$ and consider $g(x \times x')$, we want to show that $g(x \times x') = g(x) \msotimes g(x')$.
For each $k \in \bb{N}$ we have,
\begin{align*}
p_k(g(x \times x')) &= q_k(x \times x')\\
&= q_k(x) \msotimes_k q_k(x'),\\
&= p_k(g(x)) \msotimes_k p_k(g(x')),\\
&= p_k(g(x) \msotimes g(x')).
\end{align*}
But if two elements of $S(T_{\MSO(\Sigma^*)})$ agree with respect to all of the morphisms $p_k$, then they must in fact be equal, hence $g(x \times x') = g(x) \msotimes g(x')$.
The topology on $S(T_{\MSO(\Sigma^*)})$ has a basis of clopens comprising sets of the form $\dangle{\phi} \coloneqq \{x: x \models \phi\}$ where $\phi$ is a sentence. 
Consider $g^{-1}(\dangle{\phi})$.
Taking $k = \qd(\phi)$, there exists a subset $F \subseteq S_k$ such that $\dangle{\phi} = p_k^{-1}(F)$.
Then it follows at once from the fact that $p_k \circ g = q_k$ that $g^{-1}(\dangle{\phi}) = q_k^{-1}(F)$, which is clopen as it is the preimage of a continuous map into a finite discrete space.
Hence $g$ is continuous.

Having verified directly that $(S(T_{\MSO(\Sigma^*)}),\msotimes)$ satisfies the universal property for inverse limits the proof is complete.
\end{proof}

\begin{theorem}
The topological monoids $(S(T_{\MSO(\Sigma^*)}),\msotimes)$ and $\profin{\Sigma^*}$ are isomorphic.
\end{theorem}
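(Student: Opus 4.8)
The plan is to combine the two structural results just established with the abstract fact that inverse limits are insensitive to restriction along cofinal subsystems. Concretely, \cref{Theorem-Sinvlimchain} already identifies $(S(T_{\MSO(\Sigma^*)}),\msotimes)$ with the inverse limit $\varprojlim (F_{\Sigma^*} \upharpoonright C)$ of the directed system obtained by restricting $F_{\Sigma^*}:(\Con_{\omega}(\Sigma^*),\supseteq) \rightarrow \TopMon$ (see \cref{Def-Profinite}) to the subset $C = \{\approx_k : k \in \bb{N}\}$. So the task reduces to showing that this inverse limit agrees, up to isomorphism of topological monoids, with $\varprojlim F_{\Sigma^*} = \profin{\Sigma^*}$.

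For this I would invoke \cref{Theorem-EFcofinal}, which says precisely that $C$ is a cofinal subset of the poset $(\Con_{\omega}(\Sigma^*),\supseteq)$: every finite-index congruence on $\Sigma^*$ is refined by some $\approx_k$. Feeding this into \cref{Lemma-CofinalDirectedSystem} (applied to the directed system $F_{\Sigma^*}$ and the cofinal subset $C$) yields an isomorphism of topological monoids $\varprojlim (F_{\Sigma^*} \upharpoonright C) \cong \varprojlim F_{\Sigma^*}$. Since $\varprojlim F_{\Sigma^*}$ is by definition $\profin{\Sigma^*}$ (see \cref{Def-Profinite}), chaining the two isomorphisms gives
\[
(S(T_{\MSO(\Sigma^*)}),\msotimes) \;\cong\; \varprojlim (F_{\Sigma^*} \upharpoonright C) \;\cong\; \varprojlim F_{\Sigma^*} \;=\; \profin{\Sigma^*},
\]
as desired.

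In truth there is no serious obstacle remaining at this stage: the substantive work has already been done in \cref{Theorem-Sinvlimchain} (direct verification of the universal property for the inverse limit) and \cref{Theorem-EFcofinal} (the Ehrenfeucht--Fra\"iss\'e/B\"uchi argument establishing cofinality), so this concluding step is essentially formal bookkeeping. The only point warranting a moment's care is to check that the orientations match up: $F_{\Sigma^*}$ is a contravariant functor, $C$ is indexed so that larger $k$ corresponds to a finer (hence $\supseteq$-larger) congruence, and the notion of cofinality used in \cref{Lemma-CofinalDirectedSystem} is exactly the one witnessed by \cref{Theorem-EFcofinal}; with these conventions aligned the cited lemma applies verbatim.
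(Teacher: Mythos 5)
Your proposal is correct and follows exactly the same route as the paper: both combine \cref{Theorem-Sinvlimchain}, \cref{Theorem-EFcofinal}, and \cref{Lemma-CofinalDirectedSystem} to chain the isomorphisms $(S(T_{\MSO(\Sigma^*)}),\msotimes) \cong \varprojlim F' \cong \varprojlim F_{\Sigma^*} = \profin{\Sigma^*}$. Your added remark about checking the contravariance and orientation conventions is a sensible (if implicit in the paper) point of care, but the substance is identical.
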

\begin{proof}
By \cref{Def-Profinite}, $\profin{\Sigma^*}$ is $\varprojlim F_{\Sigma^*}$, where $F_{\Sigma^*}:\Con_{\omega}(\Sigma^*) \rightarrow \TopMon$ takes a finite index congruence $\rho$ to the quotient $\sfrac{\Sigma^*}{\rho}$.
In \cref{Theorem-EFcofinal} it was shown that the congruences of the form $\approx_k$, for $k \in \bb{N}$, form a cofinal subset $C$ of the poset $(\Con_{\omega}(\Sigma^*),\supseteq)$.
Hence $\profin{\Sigma^*}$ is $\varprojlim F'$.
\Cref{Lemma-CofinalDirectedSystem} then tells us that $\varprojlim F_{\Sigma^*} = \varprojlim F'$ where $F'$ is the restriction of $F_{\Sigma^*}$ to $C$.
Finally, \cref{Theorem-Sinvlimchain} says that $\varprojlim F'$ is $(S(T_{\MSO(\Sigma^*)}),\msotimes)$.
Uniqueness of inverse limits up to isomorphism then allows us to conclude that $(S(T_{\MSO(\Sigma^*)}),\msotimes)$ and $\profin{\Sigma^*}$ are isomorphic as required.
\end{proof}

\begin{corollary}
$\profin{\Sigma^*}$ is the extended Stone dual of $(\Rec(\Sigma),\lift{\conc})$.
\end{corollary}

\printbibliography

\end{document}